\newtheorem{anyprop}{Anyprop}[section]
\newtheorem{theorem}[anyprop]{Theorem}
\newtheorem{lemma}[anyprop]{Lemma}
\newtheorem{proposition}[anyprop]{Proposition}
\newtheorem{corollary}[anyprop]{Corollary}
\theoremstyle{definition}
\newtheorem{definition}[anyprop]{Definition}
\newtheorem{remark}[anyprop]{Remark}
\renewcommand  {\ker }  {\operatorname{ker}}
\theoremstyle{remark}
\numberwithin{equation}{section}
\begin{document}
\title[ON GENERALIZATIONS OF THE DST AND KOH'S CONJECTURE]
{GENERALIZATIONS OF THE DIRECT SUMMAND THEOREM OVER UFD-S FOR SOME BIGENERATED EXTENSIONS AND AN ASYMPTOTIC VERSION OF KOH'S CONJECTURE}


\author[Danny de Jes\'us G\'omez-Ram\'irez]{Danny A. de Jes\'us G\'omez-Ram\'irez}
\author[Edisson Gallego]{Edisson Gallego}
\author[Juan D. Velez]{Juan D. V\'elez}
\address{Vienna University of Technology, Institute of Discrete Mathematics and Geometry,
wiedner Hauptstaße 8-10, 1040, Vienna, Austria.}
\address{University of Antioquia, Calle 67 \# 53-108, Medell\'in, Colombia}
\address{Universidad Nacional de Colombia, Escuela de Matem\'aticas,
Calle 59A No 63 - 20, N\'ucleo El Volador, Medell\'in, Colombia.}
\email{daj.gomezramirez@gmail.com}
\email{egalleg@gmail.com}
\email{jdvelez@unal.edu.co}



\begin{abstract}
This article deals with two different problems in commutative algebra. In
the first part we give a proof of generalized forms of the 
Direct Summand Theorem (DST (or DCS)) for
module-finite extension rings of mixed characteristic $R\subset S$
satisfying the following hypotheses: The base ring $R$ is a Unique
Factorization Domain of mixed characteristic zero. We assume that $S$ is
generated by two elements which satisfy, either radical quadratic equations,
or general quadratic equations under certain arithmetical restrictions.

In the second part of this article, we discuss an asymptotic version of Koh's
Conjecture. We give a model theoretical proof using "non-standard methods".
\end{abstract}

\maketitle

\noindent Mathematical Subject Classification (2010): 13B02, 54D80

\smallskip

\noindent Keywords: ring extension, splitting morphism, discriminant, ultrafilter, non-principal ultrafilter.\footnote{This paper should be essentially cited as follows E. Gallego, D. A. J. G\'omez-Ram\'irez and and J. D. V\'elez. The Direct Summand Conjecture for some bi-generated extensions and an asymptotic
Version of Koh's Conjecture. Beitraege zur Algebra und Geometrie (Contributions to Algebra and Geometry) 57(3), 697-712, 2016.}

\section*{Introduction}

The Homological Conjectures have been a focus of research activity since
Jean Pierre Serre introduced the theory of multiplicities in the early 1960s
(\cite{serrelocal}), and since the introduction of characteristic prime
methods in commutative algebra by Peskine, Szpiro, and M. Hochster, in the
mid 1970s \cite{hochsterhomological}, \cite{Peskine}. These conjectures
relate the homological properties of a commutative rings to certain
invariants of the ring structure, as, for instance, its Krull dimension and
its depth. They have been settled for equicharacteristc rings (i.e., rings
for which the characteristic of the ring coincides with that of its residue
field) but many remain open in mixed characteristic \cite{homological
conjectures old and new}. Their validity in mixed characteristic,
nonetheless, is known for rings of Krull dimension less than four.

Among these conjectures the \textit{Direct Summand Conjecture} occupies a
central place, implying, or actually being equivalent, to many of the other%
\textit{\ }conjectures \cite{hochstercanonical}, \cite{ohidirsumcon}, \cite%
{robertsdirsumcon}, \cite{mcculloughthesis}.\newline

\textbf{Direct Summand Conjecture (DSC):} Let $R\subset S$ be a
module-finite extension of noetherian rings (i.e., $S,$ regarded as an $R$%
-module is finitely generated) where $R$ is assumed to be regular. Then, the
inclusion map $R\subset S$ splits as a map of $R$-modules; or equivalently,
there is a retraction $\rho :S\rightarrow R$ from $S$ into $R$. By a
retraction we mean an $R$-linear homomorphism satisfying $\rho (1)=1.$ The
central role of the DSC in Commutative Algebra as well as its relation to
the other homological conjectures is comprehensively explained in \cite%
{hochsterhomological}.

Now, the Direct Summand Conjecture (or D. S. Theorem)
was proved in the general setting by Yves Andr\'{e}, essentially by proving
the (remaining) case of unramified complete regular local rings with the framework
 of perfectoids developed by Scholze \cite{andre}, \cite{bhatt}.

The problem of showing the existence of a retraction $\rho :S\rightarrow R$
may be reduce to the case where $R$ and $S$ are complete local domains \cite%
{hochsterhomological}. Therefore, one may assume that $(R,\mathfrak{m})$ is
in particular a unique factorization domain (UFD) (\cite{eisen}, page 483).
If the Krull dimension of $S$ is $d$ one can always choose a system of
parameters for $S$ contained in $\mathfrak{m}$. By a \emph{system of
parameters} in an arbitrary local ring $(S,\mathfrak{n})$ of Krull dimension 
$d$ we mean a sequence of elements $\{x_{1},...,x_{d}\}$ such that the
radical of the ideal they generate in $S$ is precisely $\mathfrak{n}$, the
unique maximal ideal of $S$ (\cite{eisen}, page 222). It can be proved then
that the DSC holds if and only if for any system of parameters in $S,$ and
any natural number $t>0,$ the \textit{socle element} $(x_{1}\cdots x_{d})^{t}
$ is not contained in the ideal in $S$ generated by the $t+1$ powers of the
parameters. That is, if $(x_{1}\cdots x_{d})^{t}\notin
(x_{1}^{t+1},...,x_{d}^{t+1})S.$ This last statement is known as the \textit{%
Monomial Conjecture}. More precisely:\newline

\textbf{Monomial Conjecture (MC):} Let $(S,\mathfrak{n})$ be any local
noetherian ring of dimension $d$ and let $\{x_{1},...,x_{d}\}$ be a system
of parameters for $S$. Then, for any positive integer $t$, $(x_{1}\cdots
x_{d})^{t}\notin (x_{1}^{t+1},...,x_{d}^{t+1})S.$ This conjecture is
equivalent to the (DSC) \cite{hochstercontracted}. In low dimension, that
is, for rings of Krull dimension $\leq 2$, the DSC, and consequently the MC,
follows as a consequence of the existence of a \emph{normalization} for $S$.
By mapping $S$ into its normalization one may assume that $S$ is a \emph{%
normal} domain, and, for dimensional reasons, a Cohen-Macaulay ring (\cite%
{eisen}, pages 118, 420). Then the Auslander-Buchsbaum formula (\cite{eisen}%
, page 469) implies that $S$ must have projective dimension zero. That is, $S
$ must be an $R$-free module, and consequently the inclusion map
automatically splits.

On the other hand, the general equicharacteristc case, i.e., the case if
which $R$ contains a field of zero characteristic, is handled by elementary
methods: The trace map form the fraction field of $S$ to the fraction field
of $R$ provides a natural retraction. In fact, the inclusion map splits as a
map of $R$-modules under the much weaker hypothesis of $R$ being a normal
domain \cite[Lemma 2]{hochstercontracted}.

If $R$ is equicharacteristc, but contains a field of characteristic $p>0$, a
classical argument given by Hochster (actually, a precursor of his Tight
Closure Theory) shows the validity of the MC by a method in which the
properties of the iterated powers of the Frobenius map are exploited in a
clever way.

It should be remarked that the existence of \textit{Big Cohen Macaulay
Modules} \textit{and Big Cohen Macaulay Algebras} for equicharacteristc
rings immediately implies the validity of the (MC) and the (DSC) \cite%
{homological conjectures old and new}, \cite{CM}.

In the mixed characteristic case it is known that the DSC holds for regular
rings $R$ of Krull dimension $\leq 3.$ The dimension three case was proved
quite recently by R. Heitmann, by means of a rather involved combinatorial
argument \cite{heitmann2002}, \cite{robertsdirsumcon}. This is, undoubtedly, one of the most significant
advances in commutative algebra of the last decades. As mentioned before,
the DSC would follow from the existence of Big Cohen Macaulay Modules or Big
Cohen Macaulay Algebras in mixed characteristic, an open problem in
dimensions greater than three \cite{homological conjectures old and new}.

A natural generalization of the (DSC) was proposed by J. Koh in his doctoral
dissertation \cite{thesis koh}. Koh's question replaces the condition of $R$
being regular for the weaker condition of $S$ having finite projective
dimension as a module over $R$.\newline

\textbf{Koh's Conjecture: }Let $R$ be a noetherian ring, and let $R\subset S$
be a module-finite extension such that $S$, regarded as an $R$-module, has
finite projective dimension. Then there is a retraction from $S$ into $R$.

It is known that many theorems fail when one weakens the hypothesis of $R$
being regular and replaces it by the condition that $R$ is just noetherian,
even a Cohen Macaulay or a Gorenstein complete local domain (see Definition %
\ref{goresnstein}). Then one imposes the condition that the corresponding $R$%
-modules have finite projective dimension (if $R$ is regular this hypothesis
is satisfied automatically, due to Serre's Theorem, \cite{eisen}, Chapter
19). For instance,\textit{\ the Rigidity of Tor} is no longer true in this
context \cite{heitmann}. In a similar manner, the positivity of the
intersection multiplicity $\chi _{R}(M,N)$ for modules $M,N$ of finite
projective dimension over $R$, when $R$ is not regular, is no longer valid 
\cite{hocster-Mclaughing}. Notwithstanding, Koh's conjecture is true for
rings of equal characteristic zero. Unfortunately, it turned out to be false
for equicharacteristc rings of prime characteristic as well as in the case
of mixed characteristic \cite{velezsplitting}.

The fact that this conjecture is true in characteristic zero suggests that
it may be true \textquotedblleft asymptotically\textquotedblright . By an
asymptotic version we mean the following: Given any bound $b>0$ for the
\textquotedblleft complexity\textquotedblright\ of the extension, a notion
we will define in a precise manner in (\ref{complexity}), the set $S_{b}$ of
prime numbers for which there are counterexamples whose characteristic lie
in $S_{b}$ must be \emph{finite} (Theorem \ref{muymuybueno}). We will prove
this asymptotic form for rings that are localization at prime ideals of
affine $k$-algebras, where $k$ is a an algebraically closed field. We
achieve this by first formulating \emph{Koh's Conjecture} as a first order
sentence in the language of rings and algebraically closed fields. Then we
give a proof via \textit{Lefschetz's Principle}\emph{.} A main reference for
the model theoretical methods involved is \cite{Libro schoutens}. Also \cite%
{bounds} may be consulted for a more succinct account of \textquotedblleft
nonstandard methods in commutative algebra.\newline

In the first part of this article we provide a proof of the DSC for
module-finite extensions of rings $R\subset S$ satisfying certain
conditions. We will assume $R$ to be a UFD., where the most interesting case
will be when $R$ is a ring of mixed characteristic zero. On the other hand, $%
S$ will be a module-finite extension generated as a $R$-algebra by two
elements satisfying, either radical quadratic equations (Theorem \ref{dany 1}%
), or satisfying general quadratic equations, under certain arithmetical
restrictions (Theorem \ref{nonradicalsplit}).

In the second part of this article we discuss an asymptotic version of Koh's
Conjecture. We will develop a Model Theoretical approach using
\textquotedblleft non-standard methods\textquotedblright\ similar to those
developed by H. Schoutens \cite{bounds}. The main result of this section is
Theorem \ref{muymuybueno}.

\textit{All rings will be commutative, with identity element }$1$\textit{,
and all modules will be assumed to be unitary.}

\section{The DSC for some radical quadratic extensions}

\subsection{Some reductions\label{reducciones}}

Let $R$ be a UFD, and let us denote by $L$ its fraction field. Let $S$ be a
module-finite extension of $R$ such that $S$ is generated as an $R$-algebra
by two elements $s_{1}$ and $s_{2}$ that satisfy monic polynomials $f_{1}(x)$
and $f_{2}(x)$ in $R[x],$ respectively.

Let us first see that, without loss of generality, we may assume that $%
f_{1}(x)$ and $f_{2}(x)$ have degree greater than one. For if one of them,
for instance $f_{1}(x)$, had degree one then the element $s_{1}$ would be
already in $R.$ In this case $S=R[s_{2}]$. By mapping $C=R[x]/(f_{2}(x))$
onto $S$ we can represent $S$ as a quotient of the form $C/J$, where $J$ is
an ideal of $C$ of height zero. This is because the Krull dimension of $S$
and $T$ is the same and equal to the Krull dimension of $R$, since both
rings are module-finite extensions of $R~$(\cite{kunz}, Corollary 2.13, page
47). \label{kunzito}

Thus, $J$ would be contained in some minimal prime $P$ of $C$. Since $R[x]$
is also a UFD, $P$ is generated by a monic prime factor $p(x)$ of $f_{2}(x),$
hence $J\subset (p(x)).$ But in order to find a $R$-retraction from $S$ into 
$R$ it suffices to find any retraction \textquotedblleft further
above\textquotedblright , $\rho :C/P\rightarrow R$. This is because the
composition of the canonical map $S=C/J\rightarrow C/P$ with $\rho $ then
provides a retraction from $S$ into $R.$ But the map $R\rightarrow C/P\simeq
R[x]/(p(x))$ splits, since $R[x]/(p(x))$ is free as an $R$-module and
consequently $\rho $ can be taken as the projection onto $R$.

Let us then assume that the degrees of $f_{1}$ and $f_{2}$ are greater that
one and henceforth $S$ is minimally generated as an $R$-algebra by $%
s_{1},s_{2}\in S$. Set $T=R[x_{1},x_{2}]/I,$ with $I=(f_{1}(x_{1}),$ $%
f_{2}(x_{2}))$, where $f_{1}(x_{1})$ and $f_{2}(x_{2})$ are monic
polynomials for $s_{1}$ and $s_{2}$, respectively. It is easy to see that $T$
is a free $R$-module, because $T\cong R[x_{1}]/(f_{1})\otimes
_{R}R[x_{2}]/(f_{2})$. In fact, an $R$-basis for $T$ consists of monomials
of the form 
\begin{equation}
B=\{\overline{x}_{1}^{d_{1}}\overline{x}_{2}^{d_{2}}\text{, with }0\leq
d_{i}<\deg f_{i}\}.  \label{basis}
\end{equation}

Let $\varphi :T\rightarrow S$ be the surjective $R$-homomorphism defined by
ending $x_{i}$ into $s_{i}$. Let $J$ denote its kernel, so that $S\cong T/J$%
, where the height of $J$ must be zero, and therefore $J$ must be contained
in some minimal prime of $T$.

The next lemma analyzes the case when $T$ turns out to be a domain, i.e.,
when $J=(0)$. In this case the existence of a retraction $\rho :S\rightarrow
R$ follows automatically, since  $S=T$ is free as an $R$-module.

In what follows $i$ and $j$ denote a pair of indices $1\leq i,$ $j\leq 2,$
with $i\neq j$.

\begin{lemma}
\label{domaincriteria}Let $R$, $T$, $f_{1}$, $f_{2}$ be as above. Let $%
E_{i}=L[x_{i}]/(f_{i}),$ and $F_{j}=E_{i}[x_{j}]/(f_{j})$. Then $T$ is a
domain if and only if both $E_{i}$ and $F_{j}$ are fields. That is, if and
only if $f_{i}$ is irreducible in $L[x_{i}]$ and $f_{j}$ is irreducible in $%
E_{i}[x_{j}]$.
\end{lemma}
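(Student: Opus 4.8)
The plan is to pass to the generic fibre over $R$ and thereby reduce everything to a statement about finite-dimensional algebras over the field $L$. The first step is to note that, since $R$ is a domain and $T$ is $R$-free (by the basis \eqref{basis}), $T$ is $R$-torsion-free, so the localization map $T\to T\otimes_R L$ is injective; moreover $T\otimes_R L\cong L[x_1,x_2]/(f_1(x_1),f_2(x_2))\cong F_j$, because adjoining the remaining variable and its relation to $E_i=L[x_i]/(f_i)$ just reconstructs $L[x_1,x_2]/(f_1(x_1),f_2(x_2))$ (this identification is independent of the ordering of the indices, and in particular shows $F_1\cong F_2$). Hence, on one side, if $F_j$ is a field — in particular a domain — then $T$, being a subring of $F_j$, is a domain. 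On the other side, if $T$ is a domain then every nonzero element of $R$ is a nonzerodivisor on $T$, so $T\otimes_R L=(R\setminus\{0\})^{-1}T$ is a subring of the fraction field of $T$, whence $F_j\cong T\otimes_R L$ is a domain as well. Thus it remains to prove: if $F_j$ is a domain, then both $E_i$ and $F_j$ are fields.

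For this I would use that $F_j$ is a \emph{finite-dimensional} $L$-algebra: it is $L$-free of rank $\deg f_1\cdot\deg f_2$ (tensor the basis \eqref{basis} with $L$). The standard observation that, for a finite-dimensional algebra over a field which is a domain, multiplication by any nonzero element is an injective and hence bijective $L$-linear endomorphism — so that element is a unit — shows at once that $F_j$ is a field. For $E_i$: since $f_j$ is monic of degree $\ge 1$, division with remainder makes $F_j=E_i[x_j]/(f_j)$ a free $E_i$-module having $1$ among its basis elements, so the structure map $E_i\to F_j$ is a split injection; therefore $E_i$ is a domain, and being finite-dimensional over $L$ it is a field by the same argument.

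Finally, to obtain the reformulation in terms of irreducibility, I would invoke the principal-ideal-domain structure: $E_i=L[x_i]/(f_i)$ is a field if and only if $f_i$ is irreducible in $L[x_i]$, since $L[x_i]$ is a PID; and once $E_i$ is known to be a field, $E_i[x_j]$ is again a PID, so $F_j=E_i[x_j]/(f_j)$ is a field if and only if $f_j$ is irreducible in $E_i[x_j]$.

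I do not anticipate a genuine obstacle here. The two points that deserve care are the canonical identification $T\otimes_R L\cong F_j$ (which rests only on the symmetry of killing the ideal $(f_1(x_1),f_2(x_2))$ one variable at a time) and the use of finite-dimensionality over $L$ to upgrade ``domain'' to ``field'' — this is precisely what excludes pathologies such as $E_i$ or $E_i[x_j]/(f_j)$ being an integral domain without being a field.
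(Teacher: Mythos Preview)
Your proof is correct. The ``if'' direction --- $F_j$ a field implies $T$ a domain --- is handled identically in your argument and the paper's, via the injection $T\hookrightarrow T\otimes_R L\cong F_j$ coming from torsion-freeness. The difference lies in the converse. The paper argues by contradiction: assuming $T$ is a domain but $E_i$ (or $F_j$) fails to be a field, it factors $f_i$ (resp.\ $f_j$) nontrivially over $L$ (resp.\ over $E_i$), clears denominators back to $R$, and exhibits an explicit pair of nonzero zerodivisors in $T$, contradicting the domain hypothesis. Your route is more conceptual and direct: since a localization of a domain is a domain, $T$ being a domain forces $F_j\cong T\otimes_R L$ to be a domain, and then finite-dimensionality over $L$ upgrades ``domain'' to ``field''; the same finite-dimensional-domain argument applied to the subring $E_i\subset F_j$ finishes. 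Your approach is shorter and avoids the denominator-chasing and basis bookkeeping; the paper's approach, while more laborious, has the minor expository advantage of making the obstruction concrete --- one sees exactly which elements of $T$ multiply to zero when irreducibility fails.
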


\begin{proof}
First, we observe that $L\otimes _{R}T\cong L[x_{1},x_{2}]/I\cong
E_{i}[x_{j}]/(f_{j})=F_{j}$ and that the natural homomorphism $\mu
:T\hookrightarrow L\otimes _{R}T$ is an injection. This is because $T$ is a
torsion free $R$-module, since $R$ is a domain and $T$ is a free module.
Therefore, $T$ is a subring of $F_{j}$, and if $F_{j}$ is a field, $T$ must
be a domain. This gives the \textquotedblleft only if\textquotedblright\
part of the lemma. 

Conversely, let us assume that $T$ is a domain. Arguing by contradiction let
us suppose that either $E_{i}$ or $F_{j}$ is not a field. In the first case
there are monic polynomials of positive degree, $g_{1}$ and $g_{2}$ in $%
L[T_{i}],$ such that $f_{i}=g_{1}g_{2}$ with $\mathrm{deg}(g_{s})<\mathrm{deg%
}(f_{i})$. Now, let $\alpha \in R\smallsetminus \{0\}$ be a common
denominator for the coefficients of $g_{1}$ and $g_{2}$. The equality $%
\alpha ^{2}f_{i}=(\alpha g_{1})(\alpha g_{2})$ in $R[x_{i}]$ implies that $%
\alpha g_{i}$ are zerodivisors in $T$. Besides, $\alpha g_{i}=\alpha 
\overline{x}_{i}^{\mathrm{deg}(g_{i})}+\cdots $ cannot be zero because $%
g_{i},$ written in the $R$-basis \ref{basis} of $T$ has at least one
coefficient different from zero. Therefore, $T$ would not be a domain, a
contradiction. Then we may assume that $E_{i}$ is a field. \newline
On the other hand, if $f_{j}$ were reducible over $E_{i}[x_{j}]$ we could
write $f_{j}=h_{1}h_{2}$, where $h_{1},h_{2}\in E_{i}[x_{j}]$ are monic
polynomials of degree less than $\mathrm{deg}(f_{j})$. Let us choose $%
\widetilde{h}_{1},\widetilde{h}_{2}$ in $L[x_{1},x_{2}]$ such that $\psi (%
\widetilde{h}_{s})=h_{s}$, $s=1,2,$ where $\psi :L[x_{i},x_{j}]\rightarrow
E_{i}[x_{j}]$ is the natural homomorphism induced by the projection map $%
L[x_{i}]\rightarrow E_{i}$. In fact, we can choose each $\widetilde{h}_{s}$,
considered as a polynomial in $(L[x_{i}])[x_{j}]$, such that each of its
coefficients in $L[x_{i}]$ is a polynomial in $x_{i}$ with degree less than $%
\mathrm{deg}(f_{i})$. Hence, there exists $\widetilde{h}_{3}$ in $%
L[x_{1},x_{2}]$ such that $f_{j}-\widetilde{h}_{1}\widetilde{h}_{2}=%
\widetilde{h}_{3}f_{i}$. Choose any nonzero element $c\in R$ such that $c%
\widetilde{h}_{r}\in R[x_{1},x_{2}]$, for $r=1,2,3$. Then we have that $c%
\widetilde{h}_{1}c\widetilde{h}_{2}=c^{2}f_{j}-c(c\widetilde{h}_{3})f_{i}\in
I$ and consequently the classes of $c\widetilde{h}_{1}$ and $c\widetilde{h}%
_{2}$ in $T$ must be different from zero. Thus, $T$ would not be a domain,
which is a contradiction. This proves $f_{j}$ must be irreducible over $%
E_{i}[x_{j}].$
\end{proof}

\begin{corollary}
\label{CDU}Let $R$ be a UFD where its field of fractions $L$ has
characteristic different from two. Assume that $f_{i}=x_{i}^{2}-a_{i}$ are
irreducible polynomials in $L[x_{i}]$, $i=1,2$. If $%
T=R[x_{1},x_{2}]/(f_{1},f_{2})$ is not a domain then there exist nonzero
elements $c,d,u$ in $R$ such that $a_{1}=d^{2}u,a_{2}=c^{2}u,$ where $c,d$
are relatively prime.
\end{corollary}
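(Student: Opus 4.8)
The plan is to push the hypothesis ``$T$ is not a domain'' through Lemma~\ref{domaincriteria} and then to read the resulting reducibility statement as a divisibility statement in the UFD $R$. Fix the index pair $(i,j)=(1,2)$. Since $f_{1}=x_{1}^{2}-a_{1}$ is irreducible in $L[x_{1}]$, the ring $E_{1}=L[x_{1}]/(f_{1})$ is a field; concretely it is the quadratic extension $L(\theta)$ of $L$, where $\theta$ denotes the class of $x_{1}$, so that $\theta^{2}=a_{1}$ and $\{1,\theta\}$ is an $L$-basis. By Lemma~\ref{domaincriteria}, $T$ failing to be a domain forces $F_{2}=E_{1}[x_{2}]/(f_{2})$ to fail to be a field, i.e.\ $f_{2}=x_{2}^{2}-a_{2}$ is reducible over $E_{1}$. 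A monic quadratic over a field is reducible exactly when it has a root there, so $a_{2}$ is a square in $E_{1}$: write $\zeta=p+q\theta$ with $p,q\in L$ and $\zeta^{2}=a_{2}$. Expanding, $a_{2}=(p^{2}+q^{2}a_{1})+2pq\,\theta$, and comparing the two $\{1,\theta\}$-coordinates gives $2pq=0$. Because $\operatorname{char}L\neq 2$ this yields $p=0$ or $q=0$; if $q=0$ then $a_{2}=p^{2}$ would be a square in $L$, contradicting the irreducibility of $f_{2}$ in $L[x_{2}]$. Hence $p=0$ and $a_{2}=q^{2}a_{1}$ for some $q\in L^{\times}$.

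Next I would clear denominators. Write $q=m/n$ with $m,n\in R\setminus\{0\}$ and $\gcd(m,n)=1$, which is possible since $R$ is a UFD; then $n^{2}a_{2}=m^{2}a_{1}$ in $R$. Note $a_{1},a_{2}\neq 0$, for otherwise $f_{i}=x_{i}^{2}$ would be reducible. For each prime $\pi$ of $R$ let $v_{\pi}$ denote the exponent of $\pi$ in a factorization; from $n^{2}a_{2}=m^{2}a_{1}$ we get $2v_{\pi}(n)+v_{\pi}(a_{2})=2v_{\pi}(m)+v_{\pi}(a_{1})$, hence $v_{\pi}(a_{1})\equiv v_{\pi}(a_{2})\pmod 2$ for every $\pi$. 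Put $\gamma_{\pi}=\min\{v_{\pi}(a_{1}),v_{\pi}(a_{2})\}$; then for each $\pi$ one of $v_{\pi}(a_{1})-\gamma_{\pi}$ and $v_{\pi}(a_{2})-\gamma_{\pi}$ is zero and, by the congruence, both are even. Define $d=\prod_{\pi}\pi^{(v_{\pi}(a_{1})-\gamma_{\pi})/2}$ and $c=\prod_{\pi}\pi^{(v_{\pi}(a_{2})-\gamma_{\pi})/2}$ in $R$ (finite products, hence well defined). Then $c$ and $d$ are relatively prime, since for each $\pi$ at least one of the two exponents vanishes; and, writing $\varepsilon_{1},\varepsilon_{2}$ for the unit parts of $a_{1},a_{2}$, we have $a_{1}=d^{2}\bigl(\varepsilon_{1}\prod_{\pi}\pi^{\gamma_{\pi}}\bigr)$ and $a_{2}=c^{2}\bigl(\varepsilon_{2}\prod_{\pi}\pi^{\gamma_{\pi}}\bigr)$.

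The one point that needs genuine care, and thus the main obstacle, is that the two ``$u$''s appearing here must be made equal: a priori they differ by the unit $\varepsilon_{2}/\varepsilon_{1}$. This is resolved by comparing the unit parts on the two sides of $n^{2}a_{2}=m^{2}a_{1}$, which shows that $\varepsilon_{2}/\varepsilon_{1}$ is the square of a unit, say $w^{2}$. Replacing $c$ by $wc$ (still an element of $R$, still coprime to $d$) and setting $u=\varepsilon_{1}\prod_{\pi}\pi^{\gamma_{\pi}}$, one obtains $a_{1}=d^{2}u$ and $a_{2}=c^{2}u$ with $c,d$ relatively prime and $c,d,u$ all nonzero (being products of a unit and finitely many primes), which is exactly the assertion. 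Apart from this unit bookkeeping the argument is routine: a coordinate computation in the quadratic field $E_{1}$ followed by a valuation count in the UFD $R$.
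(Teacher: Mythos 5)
Your argument is correct, and through the key step it is the same as the paper's: invoke Lemma~\ref{domaincriteria} (together with the hypothesis that $E_1$ is a field) to deduce that $f_2$ has a root $p+q\theta$ in $E_1$, compare coordinates, use $\operatorname{char}L\neq 2$ and the irreducibility of $f_2$ over $L$ to kill the constant part, and land on $a_2=q^2a_1$ with $q\in L^{\times}$. Where you diverge is the endgame. The paper writes $q=c/d$ in lowest terms, cross-multiplies to get $d^2a_2=c^2a_1$, observes $\gcd(c^2,d^2)=1$ so $d^2\mid a_1$, sets $a_1=d^2u$, and then $a_2=c^2u$ drops out \emph{with the same} $u$ by cancelling $d^2$ --- no unit issue ever arises. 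You instead run a prime-by-prime valuation count, define $c$ and $d$ by halving exponent differences, and are then obliged to reconcile the two unit factors $\varepsilon_1,\varepsilon_2$; your resolution (the relation $n^2a_2=m^2a_1$ forces $\varepsilon_2/\varepsilon_1$ to be a unit square, so replace $c$ by $wc$) is correct, but it is extra work that the cross-multiplication argument avoids entirely. One small stylistic point in your favour: you correctly note that, since both $f_1,f_2$ are assumed irreducible over $L$, both $E_1$ and $E_2$ are fields, so there is in fact no ``without loss of generality'' needed when choosing which $f_j$ is reducible over $E_i$ --- either choice works --- whereas the paper's phrasing suggests a genuine case split.
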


\begin{proof}
Since $T$ is not a domain, by Lemma \ref{domaincriteria} we may assume
without loss of generality that one of the polynomials $f_{i},$ for instance 
$f_{2}(x_{2}),$ is reducible in $E_{1}[x_{2}]$. But this is equivalent to
saying that $f_{2}(x_{2})$ has a root $e\in E_{1}$, that we may write as $%
e=e_{1}+e_{2}\overline{x_{1}}$, where $e_{1},e_{2}\in L,$ and $\overline{%
x_{1}}^{2}=a_{1}$. Hence 
\begin{equation*}
a_{2}=e^{2}=(e_{1}^{2}+e_{2}^{2}a_{1})+2e_{1}e_{2}\overline{x_{1}}.
\end{equation*}%
Then $a_{2}=e_{1}^{2}+e_{2}^{2}a_{1}$ and $2e_{1}e_{2}=0$. But $\mathrm{char(%
}L)\neq 2$ implies $e_{1}e_{2}=0$. If $e_{2}=0$ then $a_{2}=e_{1}^{2}$ and
therefore $f_{2}=(x_{2}+e_{1})(x_{2}-e_{1})$, which is a contradiction.
Thus, $e_{1}=0$ and $a_{2}=e_{2}^{2}a_{1}$.

Now write $e_{2}=e/d$, where $c,d\neq 0$ are relatively prime elements in $R$%
. So $d^{2}a_{2}=e^{2}a_{1};$ but $d^{2}$ does not divide $c^{2}$ and 
consequently $d^{2}$ divides $a_{1}.$ Hence, there is $u\in R$ such that $%
a_{1}=d^{2}u$. Replacing $a_{1}$ in $d^{2}a_{2}=e^{2}a_{1}$ gives the
equation $d^{2}a_{2}=c^{2}d^{2}u$. After dividing by $d^{2}\neq 0$ we obtain 
$a_{2}=c^{2}u$, which proves the corollary.
\end{proof}

\begin{lemma}
\label{minimalprimesI}Let $R$ be a UFD, let $B=R[x,y]$, and let $u,c,d$ be
elements in $R$ different from zero. Define $f_{1}=x^{2}-d^{2}u$, $%
f_{2}=y^{2}-c^{2}u$, and set $I=(f_{1},f_{2}),$ the ideal in $B$ generated
by $f_{1}$ and $f_{2}$. Assume that $\{c,d\}$ is a regular sequence in $R$ (%
\cite{eisen}, page 173). Then the minimal prime ideals of $I$ are $%
P_{r}=(f_{1},f_{2},f_{3,r},f_{4,r})$, where $f_{3,r}=dy+(-1)^{r}cx,$ $%
f_{4,r}=xy+(-1)^{r}cdu$, $r=0,1$.
\end{lemma}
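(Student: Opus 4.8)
The plan is to verify directly that each $P_r$ is prime, that $I\subseteq P_r$, and that $P_0\cap P_1\subseteq\sqrt I$, so that the $P_r$ are exactly the minimal primes over $I$ (there being no others by the inclusion $\sqrt I = P_0\cap P_1$). First I would check $f_1,f_2\in P_r$ trivially, and note that $f_1,f_2$ together with $f_{3,r}=dy+(-1)^rcx$ and $f_{4,r}=xy+(-1)^rcdu$ do generate a genuine ideal of $B=R[x,y]$; the point of including both $f_{3,r}$ and $f_{4,r}$ is that modulo $(f_1,f_2)$ the element $x$ is ``$\pm d\sqrt u$'' and $y$ is ``$\pm c\sqrt u$'', and one wants to pin down the \emph{same} sign for both, which is what $f_{3,r}$ does — while $f_{4,r}$ records the resulting product relation $xy=\mp cdu$. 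A clean way to see $P_r$ is prime is to exhibit $B/P_r$ as a subring of a domain: the assignment $x\mapsto d\bar t$, $y\mapsto (-1)^{r+1}c\bar t$ into $R[t]/(t^2-u)$ (localized or not as needed) kills $f_1,f_2,f_{3,r},f_{4,r}$, and if $c,d$ form a regular sequence — in particular $d$ is a nonzerodivisor — one checks this map is injective on $B/P_r$, identifying $B/P_r$ with the subring $R[d\bar t]=R[x]/(x^2-d^2u)$ of $R[t]/(t^2-u)$, which is a domain when $x^2-d^2u$ is irreducible; irreducibility here follows since $u$ is not a square in $R$ (otherwise $T$ would be a domain, contradicting the hypotheses carried over from Corollary \ref{CDU}, or one argues directly from the regular sequence condition).

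Next I would show $\sqrt I\subseteq P_0\cap P_1$, equivalently $V(P_0)\cup V(P_1)\supseteq V(I)$, by a factorization computation in $B$. The key identity is
\[
d^2 f_2 - c^2 f_1 = d^2 y^2 - c^2 x^2 = (dy-cx)(dy+cx) = f_{3,1}\,f_{3,0},
\]
so on $V(I)$ one has $f_{3,0}f_{3,1}=0$, forcing a point of $V(I)$ into $V(f_{3,0})$ or $V(f_{3,1})$. On the locus $f_1=f_2=f_{3,r}=0$ one then derives $f_{4,r}=0$: multiply $f_{3,r}=0$ by the appropriate variable and reduce using $f_1,f_2$, e.g. $x\cdot f_{3,r}=dxy+(-1)^r c x^2 \equiv dxy+(-1)^r c d^2 u \pmod{f_1}$, so $d(xy+(-1)^r c d u)=d\,f_{4,r}\in (f_1,f_{3,r})$, and since $d$ is a nonzerodivisor on the relevant quotient this gives $f_{4,r}\in\sqrt{(f_1,f_2,f_{3,r})}$. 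Hence $V(f_1,f_2,f_{3,r})\subseteq V(P_r)$ and therefore $V(I)\subseteq V(P_0)\cup V(P_1)$, i.e. $P_0\cap P_1\subseteq\sqrt I$. Combined with $I\subseteq P_r$ this yields $\sqrt I = P_0\cap P_1$, and since each $P_r$ is prime and (one checks) they are mutually incomparable — any containment $P_0\subseteq P_1$ would force $f_{3,0},f_{3,1}$ both in $P_1$, hence $2cx,2dx\in P_1$, which is impossible when $2cd$ is a nonzerodivisor and $\operatorname{char}L\neq 2$ — the minimal primes of $I$ are precisely $P_0$ and $P_1$.

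The main obstacle I expect is the primality of $P_r$: passing from ``the obvious surjection $B/P_r\to R[x]/(x^2-d^2u)$ is defined'' to ``it is an isomorphism'' requires knowing that $P_r$ contains \emph{no more} than the four listed generators force, i.e. that $\{1,x,y\}$ with the relations $y = (-1)^{r+1}(c/d)x$ (available after inverting $d$) really collapse $B/P_r$ onto a free rank-$2$ $R$-module, and this collapsing step is exactly where the regular-sequence hypothesis on $\{c,d\}$ is used — to guarantee $d$ acts injectively and no spurious torsion relations appear. One must also be slightly careful that $x^2-d^2u$ is irreducible over $L$; this is where I would invoke that $T$ is assumed not to be a domain in the application, so by Lemma \ref{domaincriteria} and Corollary \ref{CDU} we are exactly in the situation $a_1=d^2u$ with $u$ a non-square, making $x^2-d^2u$ (equivalently $x^2-a_1$) irreducible, as assumed. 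The remaining verifications — membership $I\subseteq P_r$, the displayed factorization, the incomparability of $P_0$ and $P_1$ — are routine polynomial manipulations.
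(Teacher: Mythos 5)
Your plan follows the same route as the paper — map $B$ into $R[z]/(z^2-u)$ via $x\mapsto d\bar z$, $y\mapsto(-1)^{r+1}c\bar z$, show $\ker\psi_r=P_r$ so that $P_r$ is prime, and use $d^2f_2-c^2f_1=f_{3,0}f_{3,1}$ to pin any minimal prime of $I$ to one of the $P_r$ — but the hard step you flag as ``the main obstacle'' rather than prove. The paper establishes $\ker\psi_r=P_r$ by a division-algorithm argument: reduce $h\in\ker\psi_r$ modulo $f_1,f_2,f_{4,r}$ to a linear remainder $v_1y+b_1x+v_2$, from which $(-1)^{r+1}v_1c+b_1d=0$ and $v_2=0$; the regular-sequence hypothesis on $\{c,d\}$ then gives $c\mid b_1$, and one finds that the remainder is a multiple of $f_{3,r}$. (Also, the identification $B/P_r\cong R[x]/(x^2-d^2u)$ is wrong in general: the image of $\psi_r$ is $R+(c,d)R\cdot\bar z$, which equals $R[d\bar z]$ only when $d\mid c$. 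But an embedding into the domain $R[z]/(z^2-u)$ is all you need, so this is harmless.)

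The step ``since $d$ is a nonzerodivisor on the relevant quotient this gives $f_{4,r}\in\sqrt{(f_1,f_2,f_{3,r})}$'' is incorrect: $d$ is a zerodivisor on $B/(f_1,f_2,f_{3,r})$. Indeed $cf_{4,r}$, $df_{4,r}$, $xf_{4,r}$, $yf_{4,r}$ all lie in $(f_1,f_2,f_{3,r})$ (for instance $xf_{4,r}=yf_1+du\,f_{3,r}$ and $df_{4,r}=xf_{3,r}-(-1)^rc\,f_1$), while $f_{4,r}$ itself does not (a degree count shows this, e.g.\ with $R=\ZZ[c,d]$, $u=-1$), so $d$ annihilates the nonzero class of $f_{4,r}$. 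The conclusion $f_{4,r}\in\sqrt{(f_1,f_2,f_{3,r})}$ is still true, but the proof needs the primality of $P_r$ — exactly the step you postponed: if $Q$ is a minimal prime of $I$ with $f_{3,r}\in Q$ and $f_{4,r}\notin Q$, then from $cf_{4,r},df_{4,r}\in Q$ one gets $c,d\in Q$, hence $x^2,y^2\in Q$, hence $x,y\in Q$, so $(c,d,x,y)\subseteq Q$; but $I\subseteq P_r\subsetneq(c,d,x,y)\subseteq Q$ (strict since $x\notin P_r$, as $\psi_r(x)=d\bar z\neq 0$), contradicting minimality of $Q$. This is the paper's actual argument, and it shows your two steps cannot be decoupled the way the plan proposes.
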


\begin{proof}
Let us introduce a new variable $z$, and let $g(z)=z^{2}-u$ in $R[z].$
Clearly $g$ has no roots in $R$, since $f_{1}$ is irreducible. Therefore, $g$
is also irreducible and consequently the ideal $(g)$ is prime in $R[z]$;
this is because $R$ is a UFD. Define $\psi _{r}:B\rightarrow R[z]/(g)$ as
the unique $R$-homomorphism that sends $x$ into $d\overline{z}$ and $y$ into 
$(-1)^{r+1}c\overline{z}$, where $\overline{z}$ denotes the class of $z$ in $%
R[z]/(g).$ We prove that $\mathrm{ker}(\psi _{r})=P_{r}$. 

First, we observe that $P_{r}\subset \mathrm{ker}(\psi _{r})$, since: 
\begin{equation*}
\psi _{r}(f_{1})=d^{2}\overline{z}^{2}-d^{2}u=d^{2}u-d^{2}u=0,
\end{equation*}%
\begin{equation*}
\psi _{r}(f_{2})=c^{2}\overline{z}^{2}-c^{2}u=0,
\end{equation*}%
\begin{equation*}
\psi _{r}(f_{3,r})=\psi _{r}(dy+(-1)^{r}cx)=d(-1)^{r+1}c\overline{z}%
+(-1)^{r}cd\overline{z}=0,
\end{equation*}%
\begin{equation*}
\psi _{r}(f_{4,r})=\psi _{r}(xy+(-1)^{r}cdu)=(-1)^{r+1}cd\overline{z}%
^{2}+(-1)^{r}cdu=0.
\end{equation*}%
To prove that $\mathrm{ker}(\psi _{r})\subset P_{r}$ we use the well known
fact that in a polynomial ring in one variable with coefficients in a
commutative ring the ordinary division algorithm holds, as long as the
divisor is a monic polynomial. This justifies the following procedure:

Let $h(x,y)$ be a polynomial in $\ker (\psi _{r})$. If we regard $h(x,y)$ as
a polynomial in the variable $y$ with coefficients in $R[x]$ then, dividing
by $f_{1}=x^{2}-d^{2}u$ we can write $h(x,y)$ as 
\begin{equation}
h(x,y)=f_{1}q(x,y)+q_{1}(y)x+q_{0}(y),  \label{ee1}
\end{equation}%
where $q(x,y)\in B$ and $q_{0}(y),q_{1}(y)\in R[y]$.

Similarly, after dividing $q_{0}(y)$ by $f_{2}=y^{2}-c^{2}u$ we may find $%
q_{2}(y)\in R[y],$ and $a_{1},a_{2}$ elements in $R$ such that 
\begin{equation}
q_{0}(y)=f_{2}q_{2}(y)+(a_{1}y+a_{2}).  \label{ee2}
\end{equation}%
Similarly, there exists polynomials $q_{3}(y),$ $q_{4}(y)$ in $R[y],$ and $%
b_{1},b_{2},\in R$ such that 
\begin{equation}
q_{1}(y)x=f_{4,r}q_{3}(y)+q_{4}(y)+b_{1}x+b_{2}.  \label{ee3}
\end{equation}%
Dividing $q_{4}(y)$ by $f_{2}$ we obtain: 
\begin{equation}
q_{4}(y)=q_{5}(y)f_{2}+e_{1}y+e_{2},  \label{ee4}
\end{equation}%
for certain polynomial $q_{5}(y)$, and certain elements $e_{1},e_{2}$ in $R$%
. 

Replacing equations (\ref{ee2}), (\ref{ee3}) and (\ref{ee4}) in (\ref{ee1})
we can write $h(x,y)$ as 
\begin{equation*}
h(x,y)=f_{1}q(x,y)+f_{4,r}q_{3}(y)+(q_{5}(y)+q_{2}(y))f_{2}+l(x,y)\text{, }
\end{equation*}%
where $l(x,y)$ is the linear polynomial 
\begin{equation*}
l(x,y)=v_{1}y+b_{1}x+v_{2},
\end{equation*}%
where $v_{1}=e_{1}+a_{1}$ and $v_{2}=e_{2}+a_{2}+b_{2}$. From the condition $%
\psi _{r}(h)=0$ we get: 
\begin{equation*}
\psi _{r}(h)=\psi _{r}(l)=v_{1}(-1)^{r+1}c\overline{z}+b_{1}d\overline{z}%
+v_{2}=0.
\end{equation*}%
Since $R[z]/(g)$ is a $R$-free module with basis $\{1,\overline{z}\}$ we
then must have: 
\begin{equation}
(-1)^{r+1}v_{1}c+b_{1}d=0\text{ and }v_{2}=0.  \label{ee5}
\end{equation}%
But$\{c,d\}$ is a regular sequence in $R$, hence there must be some $u\in R$
such that $b_{1}=uc$. Since $R$ is a domain we obtain from \ref{ee5} that $%
(-1)^{r}d=v_{1}$, and consequently 
\begin{equation*}
l(x,y)=v_{1}y+b_{1}x=u((-1)^{r}dy+cx)=u(-1)^{r}f_{3,r}.
\end{equation*}%
This shows $h\in P_{r}$. 

Clearly $P_{0}$ and $P_{1}$ are prime ideal of $B$ because $B/P_{r}$ is an
integral domain isomorphic to $R[z]/(g)$.

Finally, let us show that $P_{0}$ and $P_{1}$ are the the only minimal
primes of $I.$ Let $Q$ be any other minimal prime over $I$. Then 
\begin{equation*}
c^{2}f_{1}-d^{2}f_{2}=-(dy-cx)(cx+dy)\in Q,
\end{equation*}%
so $f_{3,1}=dy-cx\in Q,$ or $f_{3,0}=cx+dy\in Q$. 
\end{proof}

\begin{proof}
Let us suppose $f_{3,1}\in Q.$ Hence 
\begin{equation*}
-x(cx-dy)=dxy-cx^{2}=dxy-cd^{2}u=d(xy-cdu)\in Q.
\end{equation*}%
Similarly, 
\begin{equation*}
y(cx-dy)=cxy-dy^{2}=cxy-dc^{2}u=c(xy-cdu)\in Q.
\end{equation*}%
We claim that $f_{4,1}=xy-cdu$ is an element of $Q$. If we suppose
otherwise, the elements $c,d$ would be contained in $Q$. Then we would have
that $x^{2}=(f_{1}+d^{2}u)$ and $y^{2}=f_{2}+c^{2}u$ would also be contained
in $Q.$ Consequently, $x,y$ would also be elements of $Q$. Henceforth, the
ideal $(c,d,x,y)B$ would be contained in $Q$. But each generator of $P_{1}$
is in $(c,d,x,y)B$ and consequently $P_{1}\subset Q$. But this must be a
proper inclusion, since $x\notin P_{1}$ (this is because every monomial
containing $x$ in each one of the generator of $P_{1}$ is either quadratic, $%
x^{2}$ or $xy$, or linear of the form $cx$, but $x\notin (x^{2},xy,cx)B,$
since $1\notin (x,y,c)B$). This contradicts the minimality of $Q,$ and
proves the claim. 

Thus $Q$ must contain $f_{1},f_{2},f_{3,1},f_{4,1}$ and consequently $%
P_{1}=Q.$

The second case, when $f_{3,0}=cx+dy\in Q,$ can be treated in a similar
fashion. When this occurs we obtain that $P_{0}=Q$.
\end{proof}

\begin{theorem}
\label{dany 1}Let $R\subset S$ be a module-finite extension of noetherian
rings such that $R$ is a UFD and such that the characteristic of the
fraction field $L$ of $R$ is different from two. Suppose $S$ is generated as
an $R$-algebra by two elements $s_{1},s_{2}\in S$ satisfying monic radical
quadratic polynomials $f_{1}=x_{1}^{2}-a_{1}$ and $f_{2}=x_{2}^{2}-a_{2}$,
respectively. Then $R\subset S$ splits.
\end{theorem}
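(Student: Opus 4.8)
The plan is to reduce everything to the statement that a suitable finite $R$-algebra is a free $R$-module. Recall from \S\ref{reducciones} that $T=R[x_1,x_2]/(f_1,f_2)$ is $R$-free with basis (\ref{basis}), that there is a surjective $R$-homomorphism $\varphi\colon T\to S$, and that $J=\ker\varphi$ has height zero, hence lies in some minimal prime $P$ of $T$. The composite $S=T/J\twoheadrightarrow T/P$ is $R$-linear and sends $1$ to $1$, so an $R$-retraction $T/P\to R$ would compose with it to give the desired $R$-retraction $S\to R$. Moreover every $R$-algebra that is free as an $R$-module admits such a retraction: complete $\{1\}$ to an $R$-basis and project onto the first coordinate. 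Thus it suffices to prove:

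\textbf{Claim.} For every minimal prime $P$ of $T$, the ring $T/P$ is free as an $R$-module.

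If $T$ is a domain then $P=(0)$ and $T/P=T$ is $R$-free by (\ref{basis}); this covers the case $J=(0)$. So assume $T$ is not a domain. By Lemma \ref{domaincriteria} either one of $f_1,f_2$ is reducible over $L$, or both are irreducible over $L$ while, for a suitable labeling, $f_j$ is reducible over $E_i=L[x_i]/(f_i)$. Suppose first that, say, $f_1$ is reducible over $L$: a root $r\in L$ of $f_1$ satisfies $r^2=a_1$, and writing $r$ in lowest terms and using that $R$ is a UFD forces the denominator to be a unit, so $a_1=b^2$ with $b\in R$ and $f_1=(x_1-b)(x_1+b)$. Every minimal prime $P$ of $T$ then contains $x_1-b$ or $x_1+b$, and $T/(x_1\mp b)\cong R[x_2]/(f_2)$. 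If $f_2$ is irreducible over $L$ this ring is torsion-free over $R$, hence embeds in the field $L[x_2]/(f_2)$ and is a domain; if $f_2$ is reducible over $L$ the same UFD argument gives $a_2=c^2$ and all minimal-prime quotients of $R[x_2]/(f_2)$ are isomorphic to $R$. Either way $T/P$, being such a quotient, is $R$-free.

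It remains to treat the case where $f_1,f_2$ are both irreducible over $L$ and $T$ is not a domain; by Lemma \ref{domaincriteria} this forces (after relabeling if needed) $f_2$ to be reducible over $E_1$, so Corollary \ref{CDU} supplies nonzero $c,d,u\in R$, with $c,d$ relatively prime, such that $a_1=d^2u$ and $a_2=c^2u$. Since $R$ is a UFD, two nonzero relatively prime elements form a regular sequence ($d$ is a nonzerodivisor on $R/(c)$ because $c\mid dr$ implies $c\mid r$), so Lemma \ref{minimalprimesI}, applied with $x=x_1,\ y=x_2$, identifies the minimal primes of $(f_1,f_2)$ as $P_0,P_1$ and gives $T/P_r\cong R[z]/(z^2-u)$, which is $R$-free on $\{1,\overline{z}\}$. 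This proves the Claim, hence the theorem. The technical weight is carried by the three preceding results, especially Lemma \ref{minimalprimesI}, all of which may be cited here; what remains is organizational — the reduction to $T/P$, the fact that a free $R$-algebra retracts onto $R$, the regular-sequence observation, and the perfect-square dispatch of the reducible-over-$L$ branch. I expect that last branch to be the only spot needing a little care, namely checking that its sub-cases are exhaustive and all yield $R$-free quotients.
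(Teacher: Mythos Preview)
Your proof is correct and follows essentially the same route as the paper: pass from $S=T/J$ to $T/P$ for a minimal prime $P$, and in the main case (both $f_i$ irreducible over $L$, $T$ not a domain) invoke Corollary~\ref{CDU} and Lemma~\ref{minimalprimesI} exactly as the paper does. The one organizational difference is that the paper first uses Going Up to replace $S$ by a domain, after which a reducible $f_i$ forces $s_i\in R$ and one falls back on the one-generator reduction of \S\ref{reducciones}; you instead bypass Going Up and dispatch the reducible-$f_i$ branch by a direct minimal-prime analysis of $T$, which is equally valid and arguably more self-contained.
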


\begin{proof}
By the Going Up (\cite{eisen}, page 129) there exists a prime ideal $%
Q\subset S$ that contracts to zero in $R$. Therefore, we may replace $S$ by $%
S/Q$ without altering the relevant hypothesis. As observed in Section \ref%
{reducciones}, it suffices to find a retraction from $S/Q$ into $R$. Hence,
we may reduce to the case where $S$ is a domain.

As observed in that same section we can write $S$ as a quotient $T/J$, where 
$T=R[x_{1},x_{2}]/I$, $I=(f_{1},f_{2}),$ and $J\subset T$ is an ideal of
height zero. Moreover, we can assume each $f_{i}$ to be irreducible in $\in
R[x_{i}]$, otherwise, as noticed in that section, the splitting of $R\subset
S$ would immediately follow from the fact that $S$ would be a free $R$%
-module. 

On the other hand, if $T$ is a domain then $J=0,$ and so $S=T$ would be a
free $R$-free (of rank four) and $R\subset S$ would also split. If on the
contrary, $T$ is not a domain, then by Corollary \ref{CDU} there exist $c,d,u
$ nonzero elements of $R$ such that $a_{1}=d^{2}u,$ $a_{2}=c^{2}u,$ where $%
c,d$ are relatively prime. This implies that $\{c,d\}$ is a regular sequence
in $R$. Then Lemma \ref{minimalprimesI} implies that the minimal primes of $T
$ are precisely $P_{0}/I$ and $P_{1}/I.$Then $J\subset P_{r}/I,$ for some $%
r=0,1$. Hence, if $\alpha :T/J\rightarrow T/(P_{r}/I)$ is the natural map
and $\psi _{r}^{\prime }:T/(P_{r}/I)\rightarrow R[z]/(z^{2}-u)$ is the $R$%
-isomorphism induced by $\psi _{r}$ (as in the proof of Lemma \ref%
{minimalprimesI}) and $\pi _{1}:R[z]/(z^{2}-u)\rightarrow R$ is the $R$%
-module projection on the first component, then a retraction for $R\subset S$
is given by $\rho =\pi _{1}\circ \psi _{r}^{\prime }\circ \alpha $. This
proves the corollary.
\end{proof}

\begin{theorem}
\label{dany 2}Let $R\subset S$ be a module-finite extension of noetherian
rings such that $R$ is a UFD, and such that the characteristic of the
fraction field $L$ of $R$ is different from two. Let us assume that the
element $2$ is a unit in $R,$ and that $S$ is generated as an $R$-algebra by
two elements $s_{1},s_{2}\in S$ satisfying monic quadratic polynomials $%
f_{1}=x^{2}-ax+b$ and $f_{2}=y^{2}-cy+d$, respectively. Then $R\subset S$
splits.
\end{theorem}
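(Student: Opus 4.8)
The plan is to reduce Theorem \ref{dany 2} to the already-established radical case, Theorem \ref{dany 1}, by completing the square. Since $2$ is a unit in $R$, the substitutions $x\mapsto x+a/2$ and $y\mapsto y+c/2$ are $R$-algebra automorphisms of $R[x,y]$, and they transform $f_1 = x^2-ax+b$ and $f_2 = y^2-cy+d$ into polynomials of the form $x^2 - a_1$ and $y^2 - a_2$ with $a_1 = a^2/4 - b$ and $a_2 = c^2/4 - d$ in $R$. Concretely, set $s_1' = s_1 - a/2$ and $s_2' = s_2 - c/2$; these are elements of $S$, and since $a/2, c/2 \in R$ they generate the same $R$-algebra, so $S = R[s_1', s_2']$. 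Moreover $s_i'$ satisfies the monic radical quadratic polynomial $x_i^2 - a_i' \in R[x_i]$ where $a_1' = (a/2)^2 - b$ and $a_2' = (c/2)^2 - d$.

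With this translation in hand, the remaining work is to verify the hypotheses of Theorem \ref{dany 1} for the presentation $S = R[s_1', s_2']$. The ring $R$ is a UFD by assumption and $\Char(L) \neq 2$ by assumption, so the only thing to check is that we may reduce to the case where each $x_i^2 - a_i'$ is irreducible in $L[x_i]$. But this is exactly the reduction already carried out in Section \ref{reducciones}: if some $x_i^2 - a_i'$ is reducible over $L$, then $a_i'$ is a square in $L$, and then $s_i'$ (hence $s_i$) is integral over $R$ satisfying a degree-one factor; one replaces $S$ by a quotient of $R[x]/(p(x))$ for a linear or remaining factor $p$, reducing the number of generators, and the splitting follows from freeness of $R[x]/(p(x))$ as in the argument preceding Lemma \ref{domaincriteria}. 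Alternatively, and more cleanly, one simply invokes Theorem \ref{dany 1} directly: its statement only requires that $s_1, s_2$ \emph{satisfy} monic radical quadratic polynomials, not that these be irreducible, and its proof handles the reducible case internally.

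Thus the argument is: apply the change of variables to realize $S$ as generated by $s_1', s_2'$ satisfying monic radical quadratics over $R$, then quote Theorem \ref{dany 1} to conclude that $R \subset S$ splits. I expect the only genuine subtlety — and the step worth spelling out — to be the verification that the square-completion substitution is legitimate, i.e., that $a/2$ and $c/2$ actually lie in $R$ (which is why the hypothesis ``$2$ is a unit in $R$'' is needed, not merely ``$\Char(L) \neq 2$''), so that $s_1', s_2'$ genuinely lie in $S$ and generate it as an $R$-algebra. Everything after that reduction is a direct citation of the previous theorem, so there is no substantial additional obstacle.
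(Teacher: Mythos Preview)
Your proposal is correct and follows essentially the same approach as the paper: complete the square using that $2$ is a unit, thereby replacing the generators $s_1,s_2$ by $s_1-a/2,\ s_2-c/2$, which satisfy radical quadratics, and then invoke Theorem~\ref{dany 1}. The paper phrases this via an explicit $R$-algebra isomorphism between $T'=R[u,v]/(u^2-(a^2/4-b),\,v^2-(c^2/4-d))$ and $T=R[x,y]/(f_1,f_2)$, whereas you work directly with the elements in $S$; the content is the same.
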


\begin{proof}
We can reduce to the radical quadratic case, as in Theorem \ref{dany 1}, by
\textquotedblleft completing the squares\textquotedblright . That is, define
rings 
\begin{equation*}
T^{\prime }=R[u,v]/(u^{2}+b-a^{2}/4,\text{ }v^{2}+c-d^{2}/4)
\end{equation*}%
and%
\begin{equation*}
T=R[x,y]/(x^{2}-ax+b,\text{ }y^{2}-cy+d).
\end{equation*}%
Let $\psi :T^{\prime }\rightarrow T$ be the linear isomorphism that sends
the variable $u$ to $x-a/2$ and the variable $v$ to $y-c/2$.

We already know that $S$ is isomorphic to a quotient of $T$ hence it is also
isomorphic to a quotient of $T^{\prime }$. Then, we may choose new
generators of $S$ as $R$-algebra satisfying monic radical quadratic
equations: Namely, the classes of $\overline{u}$ and $\overline{v}$. Thus,
by Theorem \ref{dany 1}, the extension $R\subset S$ must splits.
\end{proof}

\section{The DSC for some nonradical quadratic extensions}

Our next goal is to prove the following result.

\begin{theorem}
\label{nonradicalsplit}Let $R$ be a UFD such that such that the
characteristic of the fraction field $L$ of $R$ is different from two. Let $%
R\subset S$ be a module-finite extension such that $S$ is minimally
generated as an $R$-algebra by elements $s_{1},s_{2}\in S$. Let us assume $%
f(s_{1})=g(s_{2})=0$, where $f(x)=x^{2}-ax+b$ and $g(y)=y^{2}-cy+d,$ for
some $a,b,c,d\in R$. If $\mathrm{\gcd }(2,c)=1$ and $a^{2}-4b$ is square
free, then $R\subset S$ splits.
\end{theorem}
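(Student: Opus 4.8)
The plan is to follow the strategy of Theorem \ref{dany 1}: reduce to the case where $S$ is a domain, and then, when the auxiliary ring $T=R[x,y]/(f,g)$ fails to be a domain, produce an explicit $R$-retraction by projecting inside the fraction field of $S$. As in the proof of Theorem \ref{dany 1}, Going Up yields a prime $Q\subset S$ contracting to $(0)$ in $R$, and a retraction $S/Q\to R$ composed with $S\to S/Q$ gives one for $S$; so I may assume $S$ is a domain. Since $R$, being a UFD, is integrally closed and $S$ is integral over $R$, we have $L\cap S=R$; hence if $f$ or $g$ were reducible over $L$ the corresponding class $\bar s_i$ would lie in $R$, so that $S$ is generated over $R$ by a single quadratic element — a case disposed of by the height-zero argument of Section \ref{reducciones}. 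Thus I may assume $f$ and $g$ are irreducible over $L$ (hence the minimal polynomials of $\bar s_1,\bar s_2$). Writing $S=T/J$ with $J$ of height zero, if $T$ is a domain then $J=(0)$ and $S=T$ is $R$-free of rank four, so the inclusion splits via projection onto $1$; hence I may further assume $T$ is not a domain.

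By Lemma \ref{domaincriteria}, $T$ not being a domain (with $f$ irreducible over $L$) means $g$ is reducible over $E_1=L[x]/(f)$. Since $\operatorname{char} L\neq 2$, $E_1=L(\delta)$ with $\delta=2\bar s_1-a$, $\delta^2=a^2-4b=:D$, and in this situation the fraction field of $S$ is $E_1$. Reducibility of $g$ forces $c^2-4d$ to be a square in $L(\delta)$ but not in $L$, so $c^2-4d=q^2D$ with $q\in L^{\times}$; as $D$ is square-free and $R$ a UFD this forces $q\in R$, say $c^2-4d=e^2D$ with $e\in R$, and then the root $\bar s_2$ of $g$ equals $(c+e\delta)/2$ in $L(\delta)$ (replacing $e$ by $-e$ if necessary). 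Since $c^2-e^2D=4d$ and $D\equiv a^2\pmod 4$, I get $(c-ea)(c+ea)=c^2-e^2a^2\equiv 0\pmod 4$.

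The key point is that then $2\mid c-ea$. Let $p$ be any prime divisor of $2$ and $v_p$ its valuation, so $v_p(4)=2v_p(2)\ge 2$. From $\gcd(2,c)=1$ we get $p\nmid c$; if $p\mid a$ then $p^2\mid a^2$ and $p^2\mid 4b$, hence $p^2\mid D$, contradicting square-freeness, so $p\nmid a$; and $p\nmid e$, since $p\mid e$ would give $p^2\mid e^2D+4d=c^2$ and hence $p\mid c$. Therefore $v_p(2ea)=v_p(2)$ while $v_p\bigl((c-ea)(c+ea)\bigr)\ge 2v_p(2)$; writing $2ea=(c+ea)-(c-ea)$ and distinguishing the cases $v_p(c-ea)=v_p(c+ea)$ and $v_p(c-ea)\neq v_p(c+ea)$ yields $v_p(c-ea)\ge v_p(2)$ in both. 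Since this holds for every prime $p\mid 2$ and trivially for all others, $2\mid c-ea$, i.e.\ $(c-ea)/2\in R$.

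Finally, let $\lambda\colon L(\delta)\to L$ be the $L$-linear map with $\lambda(1)=1$ and $\lambda(\delta)=-a$. As an $R$-module $S=R\cdot 1+R\bar s_1+R\bar s_2+R\,\bar s_1\bar s_2$, and a direct computation gives $\lambda(\bar s_1)=0$, $\lambda(\bar s_2)=(c-ea)/2\in R$ by the claim, and $\lambda(\bar s_1\bar s_2)=-eb\in R$; hence $\lambda$ carries $S$ into $R$ and $\rho:=\lambda|_S$ is the desired $R$-retraction of $R\subset S$. The main obstacle is the divisibility statement of the third paragraph: one must extract $2\mid c-ea$ from the hypotheses even though $2$ may be a non-unit with repeated prime factors, and both assumptions genuinely work there — $\gcd(2,c)=1$ gives $p\nmid c$ and the square-freeness of $a^2-4b$ gives $p\nmid a$ (and already guaranteed $e\in R$ in the second paragraph), neither of which holds in general. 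The remaining steps are routine adaptations of Section \ref{reducciones} and Theorem \ref{dany 1}.
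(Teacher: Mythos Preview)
Your proof is correct and follows the same overall strategy as the paper: reduce to $S$ a domain with $T=R[x,y]/(f,g)$ not a domain, derive $c^{2}-4d=e^{2}(a^{2}-4b)$ with $e\in R$, establish $(c-ea)/2\in R$, and then produce the retraction. The paper organizes the endgame through Lemma~\ref{anetrior}: it identifies the minimal primes of $T$ as $P_{j}=(\overline{h}_{j})$ with $h_{j}=y-ex-(c\mp ae)/2$ and obtains the retraction as the composite $S=T/J\to T/P_{j}\cong R[x]/(f)\xrightarrow{\pi_{1}} R$; your $L$-linear map $\lambda$ on the fraction field $L(\delta)$ with $\lambda(1)=1$, $\lambda(\delta)=-a$ is exactly the same retraction described more directly (one checks $\pi_{1}$ and $\lambda$ agree on $1,\bar s_{1},\bar s_{2},\bar s_{1}\bar s_{2}$), so you bypass the explicit minimal-prime computation. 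Your derivation of $e\in R$ from square-freeness of $D$ alone is a bit cleaner than the paper's (which routes through $\gcd(2,c)=1$ at that stage), and your divisibility argument for $2\mid c-ea$ via valuations and the auxiliary facts $p\nmid a,c,e$ is a valid variant of the paper's argument, which works straight from $4\mid(c-ea)(c+ea)$ together with $p^{n}\mid 2ae$ and does not need those non-divisibilities.
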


In order to prove this Theorem we need the following lemma:

\begin{lemma}
\label{anetrior}Let $R$ be a UFD such that the characteristic of the
fraction field $L$ of $R$ is different from two. Let $T=R[x,y]/(f(x),g(y))$,
where $f(x)=x^{2}-ax+b$ and $g(y)=y^{2}-cy+d,$ for some $a,b,c,d\in R$.
Suppose that $\mathrm{\gcd }(2,c)=1$, that the discriminant of $f(x),$ $%
a^{2}-4b\neq 0,$ is square free in $R$ and that $f(x)$ is irreducible. 
If $T$ is not a domain, then there exists $e\in R$ such that $(c\pm ae)/2\in R$.
In this case the minimal primes of $T$ are $P_{1}=(\overline{h}_{1})$ and 
$P_{2}=(\overline{h}_{2})$, where $\overline{h}_{1}$ and $\overline{h}_{2}$ 
are the classes in $T$ of the polynomials $h_{1}(x,y)=y-ex-(c-ae)/2$ and 
$h_{2}(x,y)=y-ex-(c+ae)/2.$
\end{lemma}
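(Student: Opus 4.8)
The plan is to mimic the structure of the proof of Corollary~\ref{CDU} combined with that of Lemma~\ref{minimalprimesI}: first extract the arithmetic relation forced by reducibility, then identify the minimal primes explicitly. Since $f(x)$ is irreducible over $L[x]$, Lemma~\ref{domaincriteria} tells us that if $T$ is not a domain, then $g(y)$ must be reducible over the field $E=L[x]/(f(x))$. Writing $\xi$ for the class of $x$ in $E$, this means $g(y)$ has a root $\eta = p + q\xi \in E$ with $p,q\in L$. Expanding $\eta^2 - c\eta + d = 0$ and using $\xi^2 = a\xi - b$ gives two $L$-equations (the coefficient of $1$ and the coefficient of $\xi$ in the basis $\{1,\xi\}$). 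The $\xi$-coefficient equation, after dividing by $q$ (note $q\neq0$, else $g$ would split over $L$ and hence over $R[x]$ by Gauss, contradicting that $S$ is minimally two-generated), reads $2p + qa = c$, i.e. $p = (c - qa)/2$. The first equation then becomes a relation that, together with the hypothesis that $a^2-4b$ is square free, should force $q\in R$ (this is the analogue of the square-free divisibility argument in Corollary~\ref{CDU}): writing $q = m/n$ in lowest terms in $R$, clearing denominators and using $\gcd(2,c)=1$ plus square-freeness of the discriminant, one deduces $n$ is a unit. Setting $e := q \in R$ yields $(c-ae)/2 = p \in E\cap L = L$, and a parity/integrality check using $\gcd(2,c)=1$ gives $(c\pm ae)/2\in R$; this is the first assertion.

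Next, with $e\in R$ in hand, define $h_1(x,y) = y - ex - (c-ae)/2$ and $h_2(x,y) = y - ex - (c+ae)/2$ in $R[x,y]$. One checks directly that $h_1 h_2 \equiv g(y) \pmod{(f(x))}$: indeed $h_1 h_2 = (y - ex)^2 - c(y-ex) + \text{(constant)}$ up to the relation $x^2 = ax - b$, and the computation is arranged precisely so that the $x$-linear and $x^2$ terms collapse using $f(x)=0$, leaving $y^2 - cy + d$. Hence $g(y) - h_1 h_2 \in (f(x))$ in $R[x,y]$, so modulo $I = (f(x), g(y))$ we get $\overline{h}_1\overline{h}_2 = 0$ in $T$, showing $T$ is not a domain and that $P_i := (\overline{h}_i)$ are proper ideals. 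To see each $P_i$ is prime, observe that $T/P_i \cong R[x,y]/(f(x), h_i) \cong R[x]/(f(x)) = E_i$, where $E_i := R[x]/(f(x))$ is a domain because $f$ is irreducible over $L[x]$ and $R$ is a UFD (so $f$ irreducible over $R[x]$ by Gauss, hence $(f)$ prime, hence $R[x]/(f)$ a domain). Here one uses $h_i$ to eliminate $y$. So $P_1, P_2$ are primes containing $I$, and since $\overline{h}_1\overline{h}_2 = 0$ they are the only primes that can be minimal.

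Finally, one must check that $P_1$ and $P_2$ are genuinely minimal over $I$ — equivalently, that neither is contained in the other, and that there is no smaller prime. The inclusion $P_1 \subseteq P_2$ would force $\overline{h}_1 - \overline{h}_2 = (c+ae)/2 - (c-ae)/2 = ae \in P_2$; since $e\ne 0$ (as $q\ne 0$) and $a\ne 0$ (if $a=0$ then $\eta = q\xi$ forces, from $2p + qa = c$, that $c = 0$, contradicting $\gcd(2,c)=1$), and since $T/P_2 \cong E_2$ is a domain finite over $R$ of the same Krull dimension, $ae$ is a nonzerodivisor-type obstruction: $ae \in P_2$ would give $ae = 0$ in the domain $E_2$, impossible as $E_2 \supseteq R$ and $ae\ne 0$ in $R$. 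Symmetrically $P_2\not\subseteq P_1$. Since every minimal prime of $I$ must contain $\overline{h}_1$ or $\overline{h}_2$ (because $\overline{h}_1\overline{h}_2 = 0$ lies in it) and hence contain $P_1$ or $P_2$, and $P_1, P_2$ are themselves primes over $I$ that are incomparable, they are exactly the minimal primes.

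\textbf{Main obstacle.} The delicate point is the square-free divisibility argument establishing $e\in R$ and the half-integrality $(c\pm ae)/2\in R$: one has to juggle the two $L$-linear equations coming from $\eta^2-c\eta+d=0$, clear denominators carefully, and invoke both $\gcd(2,c)=1$ and the square-freeness of $a^2-4b$ in just the right way — this is the exact analogue of, but somewhat more intricate than, the passage ``$d^2\mid a_1$'' in Corollary~\ref{CDU}. Everything after that — the factorization identity $h_1h_2\equiv g\ (\mathrm{mod}\ f)$ and the identification $T/P_i\cong R[x]/(f)$ — is a routine verification.
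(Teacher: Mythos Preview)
Your approach mirrors the paper's almost exactly: invoke Lemma~\ref{domaincriteria} to get $g$ reducible over $E$, write down a root, derive the discriminant relation, use square-freeness of $a^2-4b$ to force $e\in R$, then argue $(c\pm ae)/2\in R$ and identify the minimal primes via $T/P_i\cong R[x]/(f)$. The only structural difference is cosmetic: you work in the basis $\{1,\xi\}$ of $E$, whereas the paper passes to $\{1,u^{1/2}\}$ with $u=a^2-4b$. Your choice in fact yields the key relation $q^2u=c^2-4d$ in one step, bypassing the paper's intermediate manoeuvre that uses $\gcd(2,c)=1$ to show $2\mid r$. The paper then proves $2\mid(c\pm ae)$ by a prime-by-prime valuation argument from $(c-ae)(c+ae)=4(d-e^2b)$; this is precisely the ``parity/integrality check'' you correctly flag as the main obstacle but do not spell out.

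Two small slips to fix. First, your justification of $q\neq0$ appeals to ``$S$ minimally two-generated'', but the lemma has no $S$; one should argue internally (if $q=0$ then $g$ splits over $L$, and one checks this degenerate case separately or excludes it via $\gcd(2,c)=1$). Second, your argument for $a\neq0$ is garbled: from $a=0$ the $\xi$-equation gives $2p=c$, not $p=0$; the honest reason $a\neq0$ (when $2$ is not a unit) is that $a=0$ would make $a^2-4b=-4b$ divisible by $4$, contradicting square-freeness. More importantly, the ``routine'' verification $h_1h_2\equiv g\pmod{f}$ actually \emph{fails} with $h_2=y-ex-(c+ae)/2$ as written: expanding gives $h_1h_2-g=e^2x^2-2exy+cex-e^2b$, whose $xy$-term cannot lie in $(f(x))$. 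The Galois conjugate of $\xi$ is $a-\xi$, so the second root of $g$ in $E$ is $(c+ae)/2-e\xi$, and the correct linear form is $h_2=y+ex-(c+ae)/2$; with that sign one gets the clean identity $h_1h_2=g-e^2f$. This is a sign typo already present in the lemma statement (and in the paper's displayed identity $h_1h_2=e^2f+g$), not a flaw in your strategy --- but you would discover it the moment you actually carried out the computation you call routine.
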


\begin{proof}
If we assume that $T$ is not a domain, then, by Lemma \ref{domaincriteria}, $g$
must be reducible in $E[y]$, where $E$ denotes the field $L[x]/(f(x))$. It
is clear that $E$, as a field, is isomorphic to the extension field $%
L(u^{1/2})$, for $u=a^{2}-4b$. Therefore, $g(y)$ has a root $\gamma =\alpha
+\beta u^{1/2}$ in $E$, since $T\cong E[y]/(g(x))$ is not a domain. 
But one can verify directly that the conjugate $%
\overline{\gamma }=\alpha -\beta u^{1/2}$ is also a root of $g$. Thus, $%
g=(y-\gamma )(y-\overline{\gamma })$. By comparing coefficients we get $%
\alpha ^{2}-\beta ^{2}u=d$ and $c=2\alpha $. Hence, $4d=c^{2}-4\beta ^{2}u$. 

Let us write $\beta =q/r$, for $q,r\in R$ such that $\mathrm{\gcd }(q,r)=1$.
From $4d=c^{2}-4\beta ^{2}u$ we obtain that $4r^{2}d=r^{2}c^{2}-4q^{2}u$.
Then, $4(r^{2}d+q^{2}u)=r^{2}c^{2}$. This implies that $4\mid r^{2}c^{2}$;
but $\mathrm{\gcd }(2,c)=1$, therefore $4\mid r^{2},$ and so $2\mid r$.
Write $r=2t$, for some $t\in R\smallsetminus \{0\}$. Thus,%
\begin{equation}
4(r^{2}d+q^{2}u)=4t^{2}c^{2}  \label{eee}
\end{equation}

After dividing by $4$ equation \ref{eee} we obtain: $%
4t^{2}d+q^{2}u=t^{2}c^{2}.$ Or, equivalently, $t^{2}(c^{2}-4d)=q^{2}u$. From
this, it follows that $t^{2}\mid q^{2}u$, which in turn implies $t^{2}\mid u$%
, because $\mathrm{\gcd }(t,q)=1$. But $u$ is square free, therefore $t$
must be a unit. Then we may assume that $q/t\in R$. If we let $e=q/t$, then
the equation $t^{2}(c^{2}-4d)=q^{2}u$ can be rewritten as:  
\begin{equation}
c^{2}-4d=e^{2}(a^{2}-4b).  \label{eq1}
\end{equation}%
We will now prove that $2\mid (c\pm ae)$. In fact, suppose that $2=\prod
p_{i}^{n_{i}}$, and that $c+ae=\prod p_{i}^{m_{i}}$ and $c-ae=\prod
p_{i}^{k_{i}}$ are factorizations into powers of prime elements. By allowing
some exponents to be zero we may assume each product involves the same
primes. We shall see that $n_{i}\leq \mathrm{min}(m_{i},k_{i})$, for all $i$%
. From (\ref{eq1}) it follows that 
\begin{equation*}
((c-ea)/2)((c+ea)/2)=d-e^{2}b\in R.
\end{equation*}%
This implies that $2n_{i}\leq m_{i}+k_{i}$, since $(c-ea)(c+ea)/2^{2}$
belongs to $R$. Arguing by contradiction, suppose there is $j$ such that $%
n_{j}>\mathrm{min}(m_{j},k_{j})$. Without loss of generality, we may assume $%
m_{j}=\mathrm{min}(m_{j},k_{j})$. Hence, $n_{j}\leq k_{j}$, otherwise $%
2n_{j}>m_{j}+k_{j}$, which is a contradiction. Therefore, $p_{j}^{n_{j}}\mid
(c-ae)$, and consequently 
\begin{equation*}
p_{j}^{n_{j}}\mid (c-ae)+2ae=c+ae
\end{equation*}%
which means that $n_{j}\leq m_{j}$. Thus, $n_j\leq \mathrm{min}(m_{j},k_{j})$.
Summarizing, $n_{i}\leq \mathrm{min}(m_{i},k_{i})$ for all $i$.
Hence, $2\mid (c\pm ae)$.

Now, let us see that $P_{1}=(\overline{h}_{1})$ and $P_{2}=(\overline{h}_{2})
$ are the minimal primes of $T$. Using the fact that $%
((c-ea)/2)((c+ea)/2)=d-e^{2}b$. We see by a direct computation that 
\begin{equation}
h_{1}h_{2}=e^{2}f+g.  \label{eeee}
\end{equation}
Hence, any minimal prime in $T=R[x,y]/(f,g)$ must contain either $\overline{h%
}_{1}$ or $\overline{h}_{2}$. On the other hand, $R[x,y]/(f,g,h_{1})\cong
R[x]/(f),$ since we can eliminate the variable $y$ using $%
h_{1}(x,y)=y-ex-(c-ae)/2,$ by sending $y$ into $ex+(c-ae)/2$. In a similar
fashion we see that $R[x,y]/(f,g,h_{2})\simeq R[x]/(f)$. Since $R[x]/(f(x))$
is a domain, $P_{1}=(\overline{h}_{1})$ and $P_{2}=(\overline{h}_{2})$ must
be prime ideals of $T$ and since each minimal prime of $T$ must contain
either $\overline{h}_{1}$ or $\overline{h}_{2},$ then these must be the only
minimal primes.
\end{proof}

Now, we are ready to prove Theorem \ref{nonradicalsplit}.

\begin{proof}
As we observed in Section \ref{reducciones} we may assume $S$ is minimally
generated as an $R$-algebra by certain elements $s_{1}$ and $s_{2}$ whose
corresponding monic polynomials $f(x)$ and $g(y)$ over $R$ have degree
greater than one. As we observed in that same section, this implies that $%
f(x)$ is irreducible. 

We know that $S$ can be represented as a quotient of the form $T/J$, where $%
T=R[x,y]/(f(x),g(y))$ and $J\subset T$ is an ideal of $T$ of height zero. By
Lemma \ref{anetrior}, $J\subset P_{1}=(\overline{h}_{1})$ or $J\subset
P_{2}=(\overline{h}_{2}),$ the minimal prime ideals of $T$ defined in Lemma %
\ref{anetrior}, with $h_{1}(x,y)=y-ex-(c-ae)/2$ and $%
h_{2}(x,y)=y-ex-(c+ae)/2.$ We can obtain the desired retraction $\rho
:S\rightarrow R$ as the composition of the following natural chain of $R$%
-homomorphisms: 
\begin{equation*}
S=T/J\rightarrow T/P_{j}\overset{\varphi }{\rightarrow }R[x]/(f(x))%
\rightarrow R\oplus R\overline{x}\overset{\pi _{1}}{\rightarrow }R,
\end{equation*}%
where $\varphi $ is the $R$-homomorphism defined by defining $x$ into $x$
and $y$ into $h_{j}-y$, and $\pi _{1}$ is canonical projection on $R$. 
\end{proof}

\section{An asymptotic form of Koh's conjecture}

\subsection{\textbf{Lefschetz's Principle}}

In this second part of this article we give an \textit{asymptotic}
formulation of Koh's Conjecture as well as its proof. Let us recall that
this conjecture states that if $R$ is a Noetherian ring and that if $%
R\subset S$ a module-finite extension of rings such that the projective
dimension of $S$ as an $R$-module is finite, then there exists a retraction $%
\rho :S\rightarrow R$.

The asymptotic form of Koh's conjecture is the following: given any bound $%
b>0$ for the \textquotedblleft complexity\textquotedblright\ of the
extension (see Definition \ref{complexity}) the set $S_{b}$ of prime numbers
for which there are counterexamples whose characteristic lie in $S_{b}$ must
be \emph{finite}. We will prove this asymptotic form for rings that are
localization at prime ideals of affine $k$-algebras, where $k$ is a an
algebraically closed field. We refer to such rings by the shorter name of%
\emph{\ local }$k$\emph{-algebras. }

Let us begin by recalling Lefschetz's principle:

\begin{theorem}[Lefschetz's Principle]
\label{lef-fuerte} Let $\phi $ be a sentence in the language of rings. The
following statements are equivalent.

\begin{enumerate}
\item The sentence $\phi $ is true in an algebraic closed field of
characteristic zero.

\item There exists a natural number $m$ such that for every $p>m$, $\phi $
is true for every algebraically closed field of characteristic $p$.
\end{enumerate}
\end{theorem}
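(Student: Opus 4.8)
The plan is to derive the statement from two facts: (a) for every characteristic $p$ — either $0$ or a prime — the first-order theory $\mathrm{ACF}_p$ of algebraically closed fields of characteristic $p$ is \emph{complete}, so that a sentence $\phi$ holds in one algebraically closed field of characteristic $p$ if and only if it holds in all of them; and (b) the compactness theorem. Fact (a) follows from Steinitz's theorem — an algebraically closed field is determined up to isomorphism by its characteristic together with its transcendence degree over the prime field — which makes $\mathrm{ACF}_p$ categorical in every uncountable power; since it has no finite models, Vaught's test yields completeness. (Equivalently one may invoke quantifier elimination for algebraically closed fields.) Write $\mathrm{ACF}$ for the theory of algebraically closed fields (the field axioms together with the scheme asserting that every monic polynomial of each positive degree has a root), and $\mathrm{ACF}_0$ for $\mathrm{ACF}$ together with the scheme $\{\,q\cdot 1\neq 0 : q\text{ a prime}\,\}$.

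\emph{Proof of $(1)\Rightarrow(2)$.} Assume $\phi$ is true in some algebraically closed field of characteristic zero; by completeness of $\mathrm{ACF}_0$ it is then true in all of them, i.e.\ $\mathrm{ACF}_0\vdash\phi$. Consequently the theory $\mathrm{ACF}\cup\{\neg\phi\}\cup\{\,q\cdot 1\neq 0 : q\text{ prime}\,\}$ is inconsistent, so by compactness a finite subtheory of it is inconsistent; such a subtheory mentions the axioms $q\cdot 1\neq 0$ only for primes $q$ below some bound $m$. Hence $\mathrm{ACF}\cup\{\neg\phi\}\cup\{\,q\cdot 1\neq 0 : q\le m\,\}$ is already inconsistent, which says exactly that every algebraically closed field whose characteristic is either $0$ or a prime exceeding $m$ satisfies $\phi$. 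In particular, for every prime $p>m$, every algebraically closed field of characteristic $p$ satisfies $\phi$, which is (2).

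\emph{Proof of $(2)\Rightarrow(1)$.} Argue by contradiction: suppose $\phi$ fails in some algebraically closed field of characteristic $0$. By completeness $\mathrm{ACF}_0\vdash\neg\phi$, hence by compactness $\mathrm{ACF}\cup\{\,q\cdot 1\neq 0 : q\le N\,\}\vdash\neg\phi$ for some $N$. Choose a prime $p>\max(m,N)$. The field $\overline{\FF_p}$ is algebraically closed of characteristic $p$, so it satisfies $q\cdot 1\neq 0$ for every prime $q\le N$; therefore $\overline{\FF_p}\models\neg\phi$. But $p>m$, so (2) gives $\overline{\FF_p}\models\phi$ — a contradiction. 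Hence (2) implies (1).

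An alternative proof of $(1)\Rightarrow(2)$, closer in spirit to the ultrafilter methods used below: were (2) false, there would be an infinite set $P$ of primes and, for each $p\in P$, an algebraically closed field $K_p$ of characteristic $p$ with $K_p\models\neg\phi$; fixing a non-principal ultrafilter $\mathcal U$ on $P$ and forming $K=\bigl(\prod_{p\in P}K_p\bigr)\big/\mathcal U$, one checks that $K$ is an algebraically closed field of characteristic $0$ — for each prime $q$ the set $\{\,p\in P : q\cdot 1=0\text{ in }K_p\,\}$ is finite and hence not in $\mathcal U$ — while {\L}o\'s's theorem gives $K\models\neg\phi$, contradicting (1) via completeness of $\mathrm{ACF}_0$. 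In all of this the only genuine content is the completeness of $\mathrm{ACF}_p$; the remaining steps are a routine use of compactness (or of {\L}o\'s's theorem), the one point needing care being the treatment of the characteristic axiom scheme and the equivalence between ``true in one'' and ``true in all'' algebraically closed fields of a fixed characteristic.
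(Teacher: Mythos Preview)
Your proof is correct and complete. The paper itself does not give an argument for this theorem; it simply cites Marker, \emph{Model Theory: An Introduction}, Corollary~2.2.10, whose proof is precisely the one you have written out: completeness of $\mathrm{ACF}_p$ (via Steinitz/Vaught or quantifier elimination) together with compactness applied to the characteristic-axiom scheme. Thus you are not taking a different route from the paper so much as supplying the standard proof that the paper outsources to a reference; the alternative ultraproduct argument you sketch is also standard and equally valid.
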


\begin{proof}
See \cite{modelteory}, Corollary 2.2.10, page 42.
\end{proof}

We state without proof one of the cornerstones of Model Theory, \emph{the} 
\emph{Compactness Theorem}. This theorem guarantees the existence of a model
for a $\mathcal{L}$-theory $T$ (we say in this case that $T$ is \emph{%
satisfiable}) if and only if there exists a model for each finite subset of $%
T$.

\begin{theorem}[Compactness Theorem]
Suppose $T$ is a $\mathcal{L}$-theory. Then, $T$ is satisfiable if and only
if every finite subset of $T$ is satisfiable.
\end{theorem}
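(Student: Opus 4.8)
The forward implication is immediate: a structure modelling $T$ also models every subset of $T$, in particular every finite subset. The content lies entirely in the converse, and the plan is to establish it by an ultraproduct construction, in keeping with the ultrafilter techniques appearing elsewhere in this paper. First, assume that every finite subset of $T$ is satisfiable. Let $I$ be the set of all finite subsets of $T$ and, for each $i \in I$, use the hypothesis to pick an $\mathcal{L}$-structure $M_i$ with $M_i \models i$. For each sentence $\phi \in T$ set $I_\phi = \{\, i \in I : \phi \in i \,\}$. The family $\{\, I_\phi : \phi \in T \,\}$ has the finite intersection property, since for $\phi_1, \dots, \phi_n \in T$ the finite set $\{\phi_1, \dots, \phi_n\}$ is itself a member of $I_{\phi_1} \cap \dots \cap I_{\phi_n}$. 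By the Ultrafilter Lemma (a consequence of the Axiom of Choice), this family extends to an ultrafilter $\mathcal{U}$ on $I$.

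Next, form the ultraproduct $M = \bigl(\prod_{i \in I} M_i\bigr)/\mathcal{U}$. The decisive tool is {\L}o\'s's Theorem: for every $\mathcal{L}$-sentence $\phi$ one has $M \models \phi$ if and only if $\{\, i \in I : M_i \models \phi \,\} \in \mathcal{U}$. Granting this, fix $\phi \in T$; whenever $i \in I_\phi$ we have $\phi \in i$ and $M_i \models i$, hence $M_i \models \phi$, so $I_\phi \subseteq \{\, i : M_i \models \phi \,\}$. Since $I_\phi \in \mathcal{U}$ and ultrafilters are upward closed, $\{\, i : M_i \models \phi \,\} \in \mathcal{U}$, and therefore $M \models \phi$ by {\L}o\'s. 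As $\phi \in T$ was arbitrary, $M \models T$, so $T$ is satisfiable.

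The only genuinely substantial step is {\L}o\'s's Theorem, proved by induction on the complexity of formulas: the atomic case is a direct computation in the quotient structure, the Boolean connectives use that $\mathcal{U}$ is closed under finite intersection and contains exactly one of each set and its complement, and the existential-quantifier case — the hard part — requires that, given a tuple of $\mathcal{U}$-classes witnessing $\exists x\,\psi$ in $M$, one choose an actual witness in $M_i$ for all $i$ in some set belonging to $\mathcal{U}$, a selection that again invokes the Axiom of Choice. An alternative route bypasses ultraproducts altogether: by G\"odel's Completeness Theorem a theory has a model if and only if it is syntactically consistent, and because formal derivations are finite, $T$ is consistent as soon as all of its finite subsets are; the cost is then the Henkin term-model construction underlying the Completeness Theorem. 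Either way, the genuine obstacle is the same semantic-to-syntactic (or choice-theoretic) bookkeeping, which is why the theorem is invoked here without proof.
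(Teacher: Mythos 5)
Your proof is correct, and it is the standard ultraproduct proof of compactness. Note, though, that the paper does not actually prove this theorem: it explicitly says ``We state without proof one of the cornerstones of Model Theory, the Compactness Theorem,'' deferring to the literature, so there is no paper argument to compare against. Your route --- pick a model $M_i$ for each finite subset $i \subseteq T$, observe that the sets $I_\phi = \{i : \phi \in i\}$ have the finite intersection property, extend the generated filter to an ultrafilter $\mathcal{U}$, and apply {\L}o\'s's Theorem to conclude $M = \bigl(\prod_i M_i\bigr)/\mathcal{U} \models T$ --- is exactly the classical ultraproduct argument, and you correctly flag the two places the Axiom of Choice enters (the Ultrafilter Lemma, and the witness-selection step in the existential case of {\L}o\'s). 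Your closing remark that one can instead route through G\"odel's Completeness Theorem plus finiteness of derivations is also accurate; that is in fact the proof given in Marker's book, the reference the paper cites for its model-theoretic background. Either argument is acceptable; the ultraproduct one fits better with the ``nonstandard methods'' spirit of the paper's second half, which is presumably why you chose it.
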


As a consequence of the Compactness Theorem one can readily deduce the
following proposition (\cite{modelteory}, page 42).

\begin{proposition}
\label{tranferencia fuerte} Let $\phi $ be a first order sentence which is
true in every field $k$ of characteristic zero. Then, there exists a prime
number $p_{0}$ such that $\phi $ is true in each field $F$ of characteristic 
$q$, for $q>p_{0}$.
\end{proposition}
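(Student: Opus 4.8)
The plan is to argue by contradiction using the \textbf{Compactness Theorem} stated above, following the standard transfer argument. Suppose the conclusion fails. Then for every prime $p_0$ there is a field $F$ whose characteristic is a prime $q>p_0$ and with $F\not\models\phi$. Since a field has characteristic $0$ or a prime, this says exactly that the set $P$ of primes $q$ for which some field of characteristic $q$ satisfies $\neg\phi$ is infinite.

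Next I would introduce an auxiliary theory $T$ in the language of rings whose axioms are: (i) the field axioms; (ii) the sentence $\neg\phi$; and (iii) for each integer $n\geq 2$, the sentence $\sigma_n$ asserting $\underbrace{1+\cdots+1}_{n}\neq 0$. A model of $T$ is precisely a field of characteristic zero in which $\phi$ is false, so by the hypothesis of the proposition $T$ has \emph{no} model. The key step is then to verify that every finite subset $T_0\subset T$ is satisfiable: such a $T_0$ mentions only finitely many of the sentences $\sigma_n$, say those with $n\leq N$, so I pick a prime $q\in P$ with $q>N$ (possible since $P$ is infinite) and a field $F_q$ of characteristic $q$ with $F_q\models\neg\phi$. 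Then $F_q$ satisfies the field axioms, satisfies $\neg\phi$, and satisfies $\sigma_n$ for every $n\leq N$ because $n<q=\mathrm{char}(F_q)$; hence $F_q\models T_0$.

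By the Compactness Theorem $T$ is satisfiable, contradicting the fact that it has no model; therefore the conclusion holds, i.e.\ there is a prime $p_0$ such that $\phi$ is true in every field of characteristic $q>p_0$. There is no serious obstacle here — the proof is a routine compactness argument — and the only point deserving a line of care is the bookkeeping in (iii): one imposes $\sigma_n$ for \emph{all} $n\geq 2$, not only for primes, so that a model of the full theory genuinely has characteristic zero, while each finite fragment only forces the characteristic to exceed a fixed bound, a condition met by infinitely many primes. (Note that this statement concerns arbitrary fields, not only algebraically closed ones, so it is not a formal consequence of Theorem \ref{lef-fuerte}; the direct compactness argument above is the natural route.)
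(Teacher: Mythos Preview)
Your argument is correct and is precisely the standard compactness proof the paper has in mind: the paper does not spell out a proof but simply states that the proposition ``can readily [be] deduced'' from the Compactness Theorem (with a reference to \cite{modelteory}, page 42), and your write-up supplies exactly those details. Your closing remark distinguishing this from Theorem~\ref{lef-fuerte} is also apt.
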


\section{Codes for polynomial rings and modules}

Throughout this discussion we will fix a field $k$ and a monomial order in
the polynomial ring $A=k\left[ x_{1},\ldots ,x_{n}\right] $.

\begin{definition}
\label{complexity}Let $R$ be a finitely generated $k$-algebra, and let $I$
be an ideal of $k\left[ x_{1},\ldots ,x_{n}\right] .$ We will say that:

\begin{enumerate}
\item The ideal $I$ has \textbf{complexity} \textbf{at most } $d$, if $n\leq
d$ and it is possible to choose generators\ for $I,$ $f_{1},\ldots ,f_{s},$
with $\deg f_{i}\leq d,$ for $i=1,\ldots ,s$.

\item We say $R$ has \textbf{\ complexity at most } $d$ if there is a
presentation of $R$ as $k\left[ x_{1},\ldots ,x_{n}\right] /I$, with $I$ of
complexity at most $d$.

\item If $J\subset R$ is an ideal, we will say that $J$ \textbf{\ has
complexity at most } $d$, if $R$ has complexity less than or equal to $d,$
and there exists a lifting of $J$ in $k\left[ x_{1},\ldots ,x_{n}\right] $,
let us say $J^{\prime },$ with complexity at most $d$.

\item If $R$ is a local $k$-algebra, we say it has \textbf{\ complexity at
most } $d$ if $R$ can be written as $R=\left( k\left[ x_{1},\ldots ,x_{n}%
\right] /I\right) _{p},$ for some prime ideal $p\subset k\left[ x_{1},\ldots
,x_{n}\right] /I$ such that the complexity of $R$ and $p$ is at most $d$.

\item If $M$ is any finitely generated $R$-module, we will say that $M$ has 
\textbf{\ complexity at most } $d$ if $R$ is a $k$-algebra of complexity at
most $d,$ and there exists an exact sequence $R^{t}\overset{\Gamma }{%
\rightarrow }R^{s}\rightarrow M\rightarrow 0$, with $s,t\leq d$, where all
the entries of the matrix $\Gamma $ are polynomials (or quotients of
polynomials, in the local case) with degree at most $d$.

\item Let $M\subset R^{d}$ be $R$-submodule. We will say that $M$ has 
\textbf{degree type at most }$d$ (written as $gt\left( M\right) \leq d$) if
the complexity of $R$ is at most $d,$ and $M$ is generated by $d$-tuples
with all its entries of degree at most $d.$ If $M$ is a finitely generated $%
R $-module, we will say that $M$ has \textbf{complexity degree at most }$d$
if there exist submodules $N_{2}\subset N_{1}\subset R^{d},$ both of degree
type at most $d,$ such that $M\cong N_{1}/N_{2}$.
\end{enumerate}
\end{definition}

Now, for any polynomial $f\in A$ we will denote by $a_{f}$ the tuple of all
the coefficients of $f$ listed according to the fixed order. When the
complexity of an ideal $I$ is at most $d,$ and $I=(f_{1},\ldots ,f_{s})$,
then $I$ can be encoded by the tuple $a_{I}$ that consists of all the
coefficients of the polynomials $f_{i}$. It is not difficult to see that the
length of this tuple only depends on $d$ \cite{bounds}. On the other hand,
given one of those tuples $a$ we can always reconstruct the ideal where it
comes from, an ideal we shall denote by $\mathcal{I}\left( a\right) $.
Similarly, if $R$ is a $k$-algebra with complexity at most $d$ then $R$ can
be written as $k\left[ x_{1},\ldots ,x_{n}\right] /\mathcal{I}\left(
a\right) .$ We will write this fact as $R=\mathcal{R}\left( a\right) $.

Let $M$ be an $R$-module. If the complexity degree of $M$ is at most $d$
then the minimal number of generator for $M$ is bounded in function of $d$.
Hence $M$ can be encoded by a tuple $v=\left( n_{1},n_{2}\right) ,$ where $%
n_{1}$ is a code for $N_{1}$ and $n_{2}$ is a code for $N_{2}.$ We will
write this as $M\cong \mathcal{M}\left( v\right) $ (see \cite{bounds}).

Finally, if $\phi (\xi )$ is a formula with free variable $\xi $ and
parameters from a ring $R$, then by $a\in |\phi |_{R}$ we will mean $%
R\models \phi (a)$ (\cite{modelteory}, Definition 1.1.6.)

The proof of the following theorem may be found in \cite{bounds}, Remark
2.3, and \cite{Libro schoutens}, Theorem 4.4.1, page 59.

\begin{theorem}
\label{member} Given $d>0$, there exists a formula \textrm{IdMem}$_{d}$
(Ideal Membership) such that for any field $k$, any ideal $I\subset k\left[
x_{1},\ldots ,x_{n}\right] ,$ and any $k$-algebra $R$, both of complexity at
most $d$ over $k$, it holds that $f\in IR$ if an only if $k\models $\textrm{%
IdMem}$_{d}(a_{f},a_{I})$. Here $a_{f}$, and $a_{I}$ denote codes for $f$
and $I,$ respectively.
\end{theorem}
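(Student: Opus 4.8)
The plan is to produce \textrm{IdMem}$_d$ by reducing ideal membership to a bounded system of linear equations over $k$ and then reading that system off the codes. First I would unwind what $f\in IR$ means: if $R=\mathcal R(a)=k[x_1,\dots,x_n]/\mathcal I(a)$ with $\mathcal I(a)=(g_1,\dots,g_t)$ and $I=\mathcal I(a_I)=(f_1,\dots,f_s)$, then $f\in IR$ is equivalent to $f\in (f_1,\dots,f_s,g_1,\dots,g_t)$ inside the polynomial ring $k[x_1,\dots,x_n]$. By Definition \ref{complexity} and the coding conventions recalled above, the number of variables $n$, the number of generators $s+t$, and the degrees of $f$ and of all the $f_i,g_j$ are bounded by an explicit function of $d$; moreover each $f_i$ and $g_j$ (hence the whole generating set) is recovered from the code tuples $a_f,a_I,a$ by fixed $k$-linear expressions in their entries.

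Next I would invoke a degree bound for ideal membership that is \emph{uniform in the field}. By Hermann's theorem (in the form corrected by Seidenberg, valid over an arbitrary field), there is a function $N=N(d)$ such that whenever $f,h_1,\dots,h_m\in k[x_1,\dots,x_n]$ have $n\le d$, $m\le 2d$, and all degrees $\le d$, one has $f\in(h_1,\dots,h_m)$ if and only if there exist polynomials $q_1,\dots,q_m$ with $\deg q_i\le N(d)$ and $f=\sum_{i=1}^m q_i h_i$. The essential feature --- and the only delicate point in the whole argument --- is that $N(d)$ does \emph{not} depend on $k$, in particular not on $\Char k$: the bound arises from applying Cramer's rule to a linear system whose size is controlled purely by $n$, $m$, and the degrees, so the same $N(d)$ works simultaneously in every characteristic.

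Once such an $N(d)$ is fixed, a polynomial $q_i$ of degree $\le N(d)$ in $\le d$ variables is determined by a tuple of at most $\binom{N(d)+d}{d}$ coefficients from $k$, so the $m$-tuple $(q_1,\dots,q_m)$ is encoded by a tuple $\bar y$ of field elements whose length $\ell=\ell(d)$ depends only on $d$. Expanding the identity $f=\sum_i q_i h_i$ and equating the coefficient of each monomial turns it into a finite system of linear equations in $\bar y$,
\[
L_j(\bar y;\,a_f,a_I,a)=0\qquad(1\le j\le \ell'(d)),
\]
where each $L_j$ is a fixed bilinear expression in $\bar y$ and in the entries of the code tuples, and where the number $\ell'(d)$ of equations again depends only on $d$. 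Therefore $f\in IR$ is equivalent to $k\models\exists\bar y\,\bigwedge_{j}\bigl(L_j(\bar y;a_f,a_I,a)=0\bigr)$, and I would \emph{define} $\textrm{IdMem}_d$ to be this first-order formula in the language of rings, having as parameters the codes of $I$ and of $R$ (the statement abbreviates the notation to $\textrm{IdMem}_d(a_f,a_I)$).

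Finally I would check the bookkeeping: that the length of $\bar y$, the number $\ell'(d)$ of equations, and the combinatorial shape of each $L_j$ depend only on $d$, so that a \emph{single} formula works for all $k$, all $I$ and $R$ of complexity $\le d$, and all admissible $f$; and that the reconstruction maps $a\mapsto\mathcal I(a)$ and $a\mapsto\mathcal R(a)$ are given by the fixed linear formulas used above. The main obstacle is concentrated entirely in the second paragraph: one must cite a version of the Hermann--Seidenberg bound stated with explicit dependence only on $n$, $m$, and the degrees and valid over an arbitrary field. Granting such a field-independent bound, the passage to the first-order formula is routine linear algebra, and this is exactly the construction carried out in \cite{bounds} and \cite{Libro schoutens}.
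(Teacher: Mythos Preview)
Your proposal is correct and is precisely the standard argument: the paper itself does not give a proof but simply refers to \cite{bounds}, Remark 2.3, and \cite{Libro schoutens}, Theorem 4.4.1, where exactly the Hermann--Seidenberg degree bound followed by the reduction to a bounded linear system over $k$ is carried out. So you have effectively reconstructed the proof that the paper delegates to those references, and nothing further is needed.
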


\begin{remark}

\begin{enumerate}
\item \label{Inc} Using Corollary \ref{member}, it is easy to get for each $d
$ formulas \textrm{Inc}$_{d}$ (Inclusion) and \textrm{Equal}$_{d}$
(Equality) such that if $R$ is a finitely generated $k$-algebra with
complexity at most $d,$ and if $J$ and $I$ are ideals of $R$ with complexity
less than $d$, then $(a_{I,}a_{J})\in \left\vert \text{Inc}_{d}\right\vert
_{K}$ (resp. $(a_{I,}a_{J})\in \left\vert \text{Equal}_{d}\right\vert _{K}$)
if and only if $I$ is included in $J$, $I\subset J$, (resp. $I=J.$)

\item \label{Formula-Ideal Maximal} Given $d,n>0$ there exists a formula 
\textrm{MaxIdeal}$_{d,n}$ such that for any algebraic closed field $k$ and
any ideal $\mathfrak{m}\subset k[x_{1},\ldots ,x_{n}]$ of complexity at most 
$d$ we have: $\mathfrak{m}$ is a maximal ideal if and only if $k\models 
\mathrm{MaxIdeal}_{d,n}(a_{m})$, where $a_{m}$ is a code for $\mathfrak{m}$.
In fact, by the Nullstellensatz $\mathfrak{m}$ is maximal if and only if
there exist $b_{1},\ldots ,b_{n}\in k$ such that $\mathfrak{m}%
=(x_{1}-b_{1},\ldots ,x_{n}-b_{n})$. Let us call $J=(x_{1}-b_{1},\ldots
,x_{n}-b_{n})$. Then, the required formula is: 
\begin{equation*}
\mathrm{MaxIdeal}(\xi )=(\exists b_{1},\ldots ,b_{n})(\mathrm{Equal}_{d}(\xi
,a_{J})),
\end{equation*}%
where $\xi $ and $a_{J}$ must be replaced by the codes $a_{m}$ of $\mathfrak{%
m},$ and $a_{J}$ of $J,$ respectively.
\end{enumerate}
\end{remark}

We will also need the following lemma:

\begin{lemma}
(\cite{bounds}, Lemma 3.2) For each $d>0$ there is a bound $D=D(d)$ with the
following property: Let $T$ be a local $k$-algebra of complexity at most $d$%
. Let $M$ and $M^{\prime }$ be submodules of $T^{d}$ of degree type at most $%
d$. Then the degree type of $(M:_{T}M^{\prime })=\{t\in T:tM^{\prime
}\subset M\}$ is bounded by $D$.
\end{lemma}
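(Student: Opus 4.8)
The plan is to reduce the statement about the colon module $(M :_T M')$ to the ideal-theoretic case, for which an effective bound is known, by a standard device of replacing a submodule of a free module with an ideal in an auxiliary ring. First I would recall the setup: $T$ is a local $k$-algebra of complexity at most $d$, and $M, M' \subseteq T^d$ are given by generators that are $d$-tuples with entries of degree at most $d$. The key observation is that computing $(M :_T M')$ amounts to solving, for each generator $m'_1, \dots, m'_r$ of $M'$, a system of linear equations over $T$ whose coefficients come from the generators of $M$ together with the $m'_j$; the solution set is the intersection of finitely many colon ideals $(M :_T m'_j)$, and each of these is itself the first component of the syzygies of a matrix whose entries have degree bounded in terms of $d$.

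The main step is therefore to show that, for a single element $m' \in T^d$, the colon ideal $(M :_T m') = \{t \in T : t m' \in M\}$ has degree type bounded by a function of $d$ alone. Here I would invoke the correspondence between submodules of $T^d$ and ideals: writing $M = \sum T e_i$-combinations of its $r \le d$ generators, membership $t m' \in M$ is a linear-algebra condition that, after the usual trick of adjoining new variables and homogenizing (or directly via a Gröbner-basis computation in $T^d$, which in turn lifts to the polynomial ring $A = k[x_1, \dots, x_n]$ by clearing denominators, since $T$ is a localization of a quotient of $A$), becomes an instance of the effective membership / syzygy bounds already packaged in Theorem \ref{member} and the surrounding discussion from \cite{bounds}. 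Because the number of generators of $M$ and $M'$, the ambient rank $d$, and the degrees are all $\le d$, the Gröbner-basis degree bounds (of Hermann / Seidenberg / Bayer–Mumford type, uniform over the field $k$) give a bound $D = D(d)$ on the degrees of generators of a lift of $(M :_T m')$ to $A$, hence on its degree type over $T$; intersecting the $\le d$ such ideals only changes $D$ by another uniformly-controlled amount.

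The hard part will be bookkeeping the localization: $T = (A/I)_p$ is not itself a quotient of a polynomial ring, so "degree of an element of $T$" must be interpreted via a chosen representative as a quotient of polynomials of degree $\le d$, and one must check that the syzygy computations descend from $A$ (or from $A_p$) to $T$ without the degree bound deteriorating — i.e. that a bounded-degree lift in $A$ of generators of the colon ideal maps onto generators of the colon ideal in $T$, up to a bounded denominator. This is exactly the kind of passage handled uniformly in \cite{bounds}, so I would cite Lemma 3.2 there and its proof, and simply indicate how the module statement for $(M :_T M')$ follows from the ideal statement by the reduction in the previous paragraph. The remaining verifications — that intersecting finitely many ideals of bounded degree type yields bounded degree type, and that the auxiliary variable count stays $\le$ some function of $d$ — are routine and I would not carry them out in detail.
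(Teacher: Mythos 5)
The paper supplies no proof of this lemma: the statement is quoted verbatim from \cite{bounds}, Lemma~3.2, and the paper adds nothing beyond the citation. There is therefore no in-paper argument against which to measure your proposal.

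On its own terms your outline is of the right shape and is in the spirit of what Schoutens actually does in \cite{bounds}: reduce $(M:_T M')$ to the intersection $\bigcap_j(M:_T m'_j)$ over a bounded number of generators of $M'$; recognize each $(M:_T m')$ as the first-coordinate projection of the syzygy module of the matrix $[\,m' \mid m_1 \mid \cdots \mid m_r\,]$; invoke uniform effective degree bounds for syzygies in $A=k[x_1,\ldots,x_n]$, which are uniform in the base field; and then descend through $A/I$ and the localization at $p$, correctly flagging the denominator bookkeeping as the delicate step.

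There is, however, a structural flaw in the write-up: your last paragraph says you would ``cite Lemma 3.2 there and its proof,'' but Lemma~3.2 of \cite{bounds} is exactly the statement you are being asked to establish, so that reference is circular. The proof must instead rest on the lower-level bounded operations catalogued earlier in \cite{bounds} (effective ideal membership as in Theorem~\ref{member}, effective syzygy bounds, and an effective intersection bound), from which the colon bound is assembled; you cannot appeal to the colon bound itself. Relatedly, the assertion that intersecting $\le d$ ideals of bounded degree type only changes the bound ``by another uniformly-controlled amount'' is itself one of the nontrivial catalogued bounds (it again goes through a syzygy computation) and deserves its own citation rather than being dismissed as routine.
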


In particular, if $T$ is a local $k$-algebra of complexity at most $d$ and $%
J\subset T$ is an ideal of complexity at most $d$ then $\mathrm{Ann}_{T}J$
must have complexity at most $D=D(d)$, a bound that only depends on $d$.

\subsection{Proof of the asymptotic form of Koh's conjecture}

In this section we give an \textit{non standard} proof of the asymptotic
version of Koh's conjecture. We start by defining the \emph{complexity of a
ring extension }\textbf{(}$d$ will denote a positive integer).

\begin{definition}
Let $R\subset S$ be a module-finite extension of local $k$-algebras. We say
this extension has complexity $\leq d$ if:

\begin{enumerate}
\item The \emph{complexity} over $k$ of the local $k$-algebras $R$ and $S$
is at most $d$.

\item The minimal number of generators of $S$ as an $R$-module is at most $%
d. $

\item The projective dimension of $S$ as an $R$-module is less than or equal
to $d$.
\end{enumerate}
\end{definition}

We intend to prove the following: Given $d>0$, there exists a prime $p_{d}$
such that for any algebraically closed field $k$ of characteristic $p>p_{d}$%
, and any modulo-finite extension $R\subset S$ of local $k$-algebras of
complexity less than $d$, there is a retraction $\rho :S\rightarrow R$ and
consequently $R\subset S$ splits.

We start by recalling the following definition.

\begin{definition}
\label{goresnstein}A local ring $(R,\mathfrak{m})$ is called Gorenstein if $%
R $ is Cohen Macaulay (CM), and if for any system of parameters $%
\{x_{1},\ldots ,x_{d}\}$ in $R$ the socle of $\overline{R}=R/(x_{1},\ldots
,x_{d})$, defined as $\mathrm{Ann}_{\overline{R}}(\mathfrak{m})$, is a $1$%
-dimensional $R/\mathfrak{m}$-vector space ( \cite{eisen}, page 526).
\end{definition}

The following result will be of fundamental importance \cite{velez-rigo}.

\begin{proposition}
\label{retraccion} Let $(R,\mathfrak{m})\subset (T,\mathfrak{n})$ be a
module-finite extension of local rings with $T$ a free $R$-module. If $T/%
\mathfrak{m}T$ is Gorenstein, and if $J\subset T$ is any ideal, then there
exists a retraction $\rho :T/J\rightarrow R$ if and only if $\mathrm{Ann}%
_{T}(J)\nsubseteq \mathfrak{m}T.$
\end{proposition}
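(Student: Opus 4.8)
The plan is to give a direct argument using the structure of $T$ as a free $R$-module together with the Gorenstein hypothesis on the fiber $\overline{T} := T/\mathfrak{m}T$. The key observation is that retractions $\rho : T/J \to R$ are controlled by a single $R$-linear functional: since $T$ is free over $R$, the $R$-module $\Hom_R(T, R)$ is also free of the same rank, and a retraction of $T/J$ onto $R$ is exactly an element $\lambda \in \Hom_R(T,R)$ with $\lambda(1) = 1$ and $\lambda(J) = 0$, i.e. $\lambda \in \Hom_R(T/J, R)$ normalized at $1$. So the whole problem reduces to: when does there exist $\lambda \in \Hom_R(T,R)$ annihilating $J$ with $\lambda(1)=1$?

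First I would treat the ``only if'' direction, which is the easy one. Suppose $\rho : T/J \to R$ is a retraction, and pull it back to $\lambda : T \to R$ with $\lambda(J) = 0$ and $\lambda(1) = 1$. If $\Ann_T(J) \subseteq \mathfrak{m}T$, I want a contradiction. The idea is to reduce modulo $\mathfrak{m}$: $\lambda$ induces an $R/\mathfrak{m}$-linear functional $\overline{\lambda} : \overline{T} \to R/\mathfrak{m}$ with $\overline{\lambda}(\overline 1) = 1$ and $\overline{\lambda}(\overline J) = 0$, where $\overline J = (J + \mathfrak{m}T)/\mathfrak{m}T$. Now use that $\overline{T}$ is a Gorenstein Artinian local ring (Artinian because $T$ is module-finite over the local ring $R$ and we kill $\mathfrak{m}$), hence it is injective as a module over itself, so the pairing $\overline T \times \overline T \to \overline T$ followed by projection onto the socle is a perfect pairing; equivalently, $\Hom_{R/\mathfrak m}(\overline T, R/\mathfrak m) \cong \overline T$ as $\overline T$-modules. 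Under this self-duality, the functional $\overline\lambda$ corresponds to an element $\overline t \in \overline T$, and the condition $\overline\lambda(\overline J) = 0$ translates to $\overline t \in \Ann_{\overline T}(\overline J) = (0 :_{\overline T} \overline J)$, which by the same duality is the annihilator of $\overline J$. The condition $\Ann_T(J)\subseteq \mathfrak m T$ forces (after checking that $\Ann_{\overline T}(\overline J)$ is the image of $\Ann_T(J)$, which uses freeness of $T$ and right-exactness arguments) that $\Ann_{\overline T}(\overline J) \subseteq \mathfrak m \overline T = 0$, hence $\overline t = 0$, hence $\overline\lambda = 0$, contradicting $\overline\lambda(\overline 1) = 1$.

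For the ``if'' direction, assume $\Ann_T(J) \nsubseteq \mathfrak m T$, so pick $w \in \Ann_T(J) \setminus \mathfrak m T$. Then $\overline w \ne 0$ in $\overline T$. Using self-injectivity of $\overline T$ again: since $\overline w \neq 0$ in the Artinian Gorenstein local ring $\overline T$, the ideal $\overline w \overline T$ contains the socle, so there is $\overline v \in \overline T$ with $\overline v\,\overline w$ equal to a chosen socle generator $\sigma$; and because the socle is a $1$-dimensional $R/\mathfrak m$-space, there is a functional $\overline\mu : \overline T \to R/\mathfrak m$ that is nonzero on $\sigma$ and therefore nonzero on $\overline v \overline w$; rescaling, arrange $\overline\mu(\overline v \overline w) = 1$. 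Set $\overline\lambda := \overline\mu(\overline v \cdot -)$, an $R/\mathfrak m$-functional with $\overline\lambda(\overline w) = 1$. Now lift: choose any $R$-linear $\lambda_0 : T \to R$ reducing to $\overline\lambda$ (possible since $\Hom_R(T,R)$ is free and $\Hom_R(T,R) \otimes R/\mathfrak m \twoheadrightarrow \Hom_{R/\mathfrak m}(\overline T, R/\mathfrak m)$). Then $\lambda_0(w) \equiv 1 \pmod{\mathfrak m}$, so $\lambda_0(w)$ is a unit in the local ring $R$; replace $\lambda_0$ by $\lambda := \lambda_0(w)^{-1}\lambda_0$, a unit multiple, so that $\lambda(w) = 1$. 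Now define $\rho : T \to R$ by $\rho(t) := \lambda(wt)$. Since $w \in \Ann_T(J)$ we have $wJ = 0$, hence $\rho(J) = 0$, so $\rho$ factors through $T/J$; and $\rho(1) = \lambda(w) = 1$, so $\rho$ is the desired retraction.

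The main obstacle is the bookkeeping around reduction mod $\mathfrak m$: namely verifying carefully that $\Ann_T(J)$ maps onto $\Ann_{\overline T}(\overline J)$ (equivalently that the formation of the annihilator commutes with the base change $R \to R/\mathfrak m$ in this setting), and that $\Hom$ commutes with this base change. Both hold because $T$ is $R$-free of finite rank, so $\Hom_R(T, -)$ is exact and $\Ann_T(J) = \ker(T \to \Hom_R(J, T))$ behaves well; but one must also use that $J$ is finitely generated and invoke Nakayama-type reasoning or a direct matrix computation with the free basis $B$ of $T$ to pin down the image of $\Ann_T(J)$ precisely. Once that lemma is in place, the self-duality of the Artinian Gorenstein ring $\overline T$ does all the real work.
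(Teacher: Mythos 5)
The paper itself gives no proof of Proposition~\ref{retraccion}; it is quoted from \cite{velez-rigo}. Reviewing your argument on its own terms: the reduction to finding $\lambda\in\Hom_R(T,R)$ with $\lambda(1)=1$ and $\lambda(J)=0$ is the right framing, and your ``if'' direction is correct --- though note it does not actually use the Gorenstein hypothesis at all. Once $w\in\Ann_T(J)\setminus\mathfrak m T$ is chosen, since $T$ is $R$-free some basis coordinate of $w$ is a unit of $R$, so the corresponding coordinate projection $\lambda$ already has $\lambda(w)$ a unit, and $\rho(t)=\lambda(w)^{-1}\lambda(wt)$ is the retraction. No socle generator, no self-duality of $\overline T$.

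The ``only if'' direction has a genuine gap, precisely at the step you flagged as ``bookkeeping.'' The equality between the image of $\Ann_T(J)$ in $\overline T:=T/\mathfrak m T$ and $\Ann_{\overline T}(\overline J)$ is false. The image is $(\Ann_T(J)+\mathfrak m T)/\mathfrak m T$, while $\Ann_{\overline T}(\overline J)=(\mathfrak m T:_T J)/\mathfrak m T$, and freeness of $T$ does not make these agree: $\Ann_T(J)$ is a kernel and $J$ is not free, so formation of the annihilator does not commute with the right-exact functor $-\otimes_R k$. Worse, the conclusion you want, $\Ann_{\overline T}(\overline J)=0$, can essentially never occur: $\overline T$ is a zero-dimensional Gorenstein local ring, so Matlis duality gives $\Ann_{\overline T}(\Ann_{\overline T}(I))=I$ for every ideal $I\subseteq\overline T$, hence $\Ann_{\overline T}(I)=0$ forces $I=\overline T$. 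A retraction forces $J\neq T$, hence $\overline J\neq\overline T$ (as $J\subseteq\mathfrak n$ and $\mathfrak m T\subseteq\mathfrak n$), hence $\Ann_{\overline T}(\overline J)\neq 0$, and the contradiction via ``$\overline t=0$'' never arises. For a concrete failure of the claimed equality, take $R=k[[x]]$, $T=R[y]/(y^2)$, $J=xT$: then $\Ann_T(J)=0\subseteq\mathfrak m T$ has zero image, while $\overline J=0$ and $\Ann_{\overline T}(\overline J)=\overline T$.

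The fix is to do the duality over $T$ rather than over $\overline T$. Freeness of $T$ together with the Gorenstein fiber condition give $\Hom_R(T,R)\cong T$ as $T$-modules: indeed $\Hom_R(T,R)$ is $R$-free of the same rank as $T$, and
$\Hom_R(T,R)\otimes_T\overline T\cong\Hom_R(T,R)\otimes_R k\cong\Hom_k(\overline T,k)\cong\overline T$
is cyclic, so by Nakayama $\Hom_R(T,R)$ is $T$-cyclic, and the surjection $T\twoheadrightarrow\Hom_R(T,R)$ between finite free $R$-modules of equal rank is an isomorphism. Fix a generator $\mu$. Every $\lambda\in\Hom_R(T,R)$ is $\lambda=t\mu$ for a unique $t$; since $\mu$ has zero $T$-annihilator, $\mu$ vanishes on no nonzero ideal, so $\lambda(J)=0$ iff $tJ=0$ iff $t\in\Ann_T(J)$, and $\lambda(1)=\mu(t)$. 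If $\rho$ is a retraction, the corresponding $t$ lies in $\Ann_T(J)$ with $\mu(t)=1$ a unit; since $\mu(\mathfrak m T)\subseteq\mathfrak m$, this forces $t\notin\mathfrak m T$, i.e. $\Ann_T(J)\nsubseteq\mathfrak m T$. This is the argument in \cite{velez-rigo}, and your write-up is essentially a reduction of it to the special fiber, which loses exactly the information you need.
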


We also need the following \cite{velezsplitting}.

\begin{theorem}[Koh in characteristic zero]
\label{koh 0}Let $R$ be a ring containing a field of characteristic zero,
and let $R\subset S$ be a module-finite extension of rings such that the
projective dimension of $S$ as an $R$-module is finite. Then, there exists a
retraction $\rho :S\rightarrow R.$
\end{theorem}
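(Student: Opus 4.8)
The plan is to combine the classical characteristic-zero device---a trace (averaging) retraction, which works whenever the base ring is normal---with the hypothesis $\mathrm{pd}_R S<\infty$, whose role is precisely to compensate for the fact that a general noetherian $R$ need not be normal; the characteristic-zero hypothesis itself will be needed only for the trace step, which collapses in characteristic $p$, as it must, since the statement is false there.

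First I would run the standard reductions. Since $R$ is noetherian and $S$ is a finitely presented $R$-module, $\Hom_R(S,-)$ commutes with localization, so $R\hookrightarrow S$ splits if and only if the evaluation map $\Hom_R(S,R)\to R$, $\varphi\mapsto\varphi(1)$, is surjective, and this can be tested after localizing at each maximal ideal of $R$; splitting also descends along the faithfully flat map $R\to\widehat R$, and passing to $\widehat R$ preserves module-finiteness, preserves $\mathrm{pd}_R S<\infty$ (base-change a finite free resolution), and keeps a field of characteristic zero inside $\widehat R$. So assume $(R,\mathfrak m)$ is complete local with $\mathbb Q\subset R$. I would then reduce to $R$ (and $S$) a domain, as in the reductions behind the DSC; here is the first genuinely delicate point, for---unlike in the regular case---modding out a minimal prime can a priori destroy the finiteness of $\mathrm{pd}_R S$, so the minimal primes of $R$ and $S$ must be selected compatibly and one must check that the surviving extension still satisfies the hypothesis. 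Finally, if $\mathrm{pd}_R S=0$ then $S$ is $R$-free and $R\hookrightarrow S$ splits trivially, so assume $\mathrm{pd}_R S\ge 1$.

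By Cohen's structure theorem and Noether normalization I would then choose a regular complete local subring $A=k[[t_1,\dots,t_d]]\subset R$, with $k$ a coefficient field and $d=\dim R$, over which $R$---hence $S$---is module-finite. As $A$ is regular, hence normal, and the characteristic is zero, $\operatorname{Frac}(S)/\operatorname{Frac}(A)$ is finite separable, so the normalized trace $\tfrac{1}{[\operatorname{Frac}(S):\operatorname{Frac}(A)]}\operatorname{Tr}\colon S\to A$ is a well-defined $A$-linear retraction of $A\hookrightarrow S$ (normality of $A$ keeps the trace inside $A$, and the degree is a unit in $\mathbb Q$); composing with $A\hookrightarrow R$ yields an $A$-linear $\tau'\colon S\to R$ with $\tau'(1)=1$. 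The heart of the matter, and the step I expect to be the main obstacle, is to upgrade $\tau'$ to an honest $R$-linear retraction, and this is exactly where $\mathrm{pd}_R S<\infty$ must be spent: the trace alone produces only an $A$-linear section, and the numerical input from finite projective dimension is what forces $R$-linearity. I would reformulate splitting cohomologically---it suffices that $H^d_{\mathfrak m}(R)\to H^d_{\mathfrak m}(S)$ be injective (cleanest when $R$ is Cohen--Macaulay)---and then control the obstruction as follows: applying the Auslander--Buchsbaum formula over both $A$ and $R$, and using that depth over $A$ and over $R$ coincide for finite $R$-modules, one obtains the multiplicativity $\mathrm{pd}_A S=\mathrm{pd}_A R+\mathrm{pd}_R S$, whence $\operatorname{depth}_R S\le\operatorname{depth}_R R$. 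Feeding this depth inequality into the long exact local-cohomology sequence of $0\to R\to S\to S/R\to 0$ constrains the cokernel $S/R$ tightly enough that, via local duality over $R$, the obstruction $\Ext^1_R(S/R,\omega_R)$ either vanishes or at least annihilates the class of $\tau'$, which is what is needed. I expect the careful bookkeeping of depths, and the identification of exactly which $\Ext$-vanishing the hypothesis buys, to be the technical crux of the argument.
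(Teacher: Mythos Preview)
First, note that the paper does not itself prove this theorem: it is simply quoted from \cite{velezsplitting}, so there is no in-paper argument to compare your sketch against. That said, your sketch has a structural gap at the decisive step.

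Your plan is (i) produce, via the trace over a regular Noether normalization $A\subset R$, an $A$-linear retraction $\tau'\colon S\to R$, and then (ii) upgrade $\tau'$ to an $R$-linear retraction using $\mathrm{pd}_R S<\infty$. Step (i) is fine, but observe that its \emph{output}---that the composite $A\hookrightarrow S$ splits $A$-linearly---is available in every equicharacteristic situation, not only in characteristic zero: since $A$ is regular, $A\hookrightarrow S$ splits by the ordinary Direct Summand Theorem regardless of characteristic. Thus the sole characteristic-zero ingredient in your outline produces a conclusion that holds equally well in characteristic $p$. Step (ii) then invokes only this $A$-splitting together with Auslander--Buchsbaum bookkeeping (the inequality $\mathrm{depth}_R S\le \mathrm{depth}\,R$, the long exact sequence in local cohomology, local duality), all of which is characteristic-free. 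If that implication were valid it would establish Koh's conjecture in characteristic $p$ as well---but, as the introduction of the paper recalls, the conjecture is \emph{false} there, with explicit counterexamples in \cite{velezsplitting}. In those counterexamples one has $A\hookrightarrow S$ split (because $A$ is regular), $\mathrm{pd}_R S<\infty$, and yet $R\hookrightarrow S$ does not split; so step (ii) is simply false as a general implication. More locally: the depth inequality does not force $H^d_{\mathfrak m}(R)\hookrightarrow H^d_{\mathfrak m}(S)$; the obstruction to $R$-splitting is a class in $\Ext^1_R(S/R,R)$, not in $\Ext^1_R(S/R,\omega_R)$; and an $A$-linear map $\tau'$ has no natural incarnation as a class in any $R$-linear $\Ext$ group, so the phrase ``annihilates the class of $\tau'$'' does not type-check. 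The genuine characteristic-zero input in \cite{velezsplitting} must enter over $R$ (or $S$) itself, not over an auxiliary regular subring; the trace-over-$A$ device is too coarse to separate characteristic zero from characteristic $p$. A secondary issue: your reduction to $R$ a domain is flagged as delicate but not carried out, and indeed passing to $R/\mathfrak p$ for a minimal prime $\mathfrak p$ need not preserve finiteness of $\mathrm{pd}_R S$.
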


\begin{remark}
\label{grave}Let $(R,\mathfrak{m})$ be a local ring and let $R\subset S$ be
a module-finite extension. Let us take $s_{1},\ldots ,s_{n}\in S$ generators
of $S$ as an $R$-algebra. For each $s_{i}\in S$ choose an arbitrary monic
polynomial with coefficients in $R,$ $%
f_{i}(t)=x_{i}^{d_{i}}+r_{i1}x_{i}^{d_{i}-1}+\cdots +r_{id_{i}}$, that each $%
s_{i}$ satisfies. Let $T$ denote the quotient ring $R[x_{1},\ldots
,x_{n}]/(f_{1}(x_{1}),\ldots ,f_{n}(x_{n}))$. As in Section \ref{reducciones}%
, we may represent $S$ as a quotient of $T$ by defining a surjective $R$%
-homomorphism $\phi :T\rightarrow S$, sending the class of $x_{i}$ into $%
s_{i}$. If $J$ denotes its kernel then $\mathrm{ht}(J)=0$, as showed in that
same section.

The representation of $S$ as the quotient $T/J$ makes it possible to give a
very useful criterion for the existence of a retraction. Let $(R,\mathfrak{m}%
)$ be a local ring and let $R\subset S$ be a module-finite extension. Then,
the inclusion map $R\subset T/J$ splits if and only if $\mathrm{Ann}_{T}(J)$
is not contained in $\mathfrak{m}T$. This follows immediately from
Proposition \ref{retraccion}.
\end{remark}

The following theorem states that given $i\geq 0$ and $d>0$ there exists a
formula $(\mathrm{Tor}_{i})_{d}$ such that for any $k$-algebra $R$ and $R$%
-modules $M,N,V,$ all of complexity at most $d$, then $\mathrm{Tor}%
_{i}^{R}(M,N)\cong V$ if and only if $(\mathrm{Tor}_{i})_{d}$ evaluated in
codes of $R,M,N$ and $V$ is true over $k$ (analogously for $\mathrm{Ext}$).

\begin{theorem}
\label{4.4}Given $i\geq 0,d>0$, there exist formulas $(\mathrm{Tor}_{i})_{d}$
and $(\mathrm{Ext}^{i})_{d}$ with the following properties: Let $k$ be any
field; then, if a tuple $(a,m,n,v)$ is in $|(\mathrm{Tor}_{i})_{d}|_{k}$
(respectively, in $|(\mathrm{Ext}^{i})_{d}|_{k}$), then $\mathcal{M}(v)$ is
isomorphic to \[\mathrm{Tor}_{i}^{\mathcal{A}(a)}(\mathcal{M}(m),\mathcal{M}%
(n))\] (respectively to $\mathrm{Ext}_{\mathcal{A}(a)}^{i}(\mathcal{M}(m),%
\mathcal{M}(n))$). Moreover, for each tuple $(a,m,n)$ we can find at least
one $v$ such that $(a,m,n,v)$ belongs to $|(\mathrm{Tor}_{i})_{d}|_{k}$
(respectively, to $|(\mathrm{Ext}^{i})_{d}|_{k}$).
\end{theorem}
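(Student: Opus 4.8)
The plan is to construct the formulas $(\mathrm{Tor}_i)_d$ and $(\mathrm{Ext}^i)_d$ explicitly, by encoding a bounded-degree free resolution computation into the first-order language of rings. First I would recall that, given a $k$-algebra $A = \mathcal{A}(a)$ of complexity at most $d$ and an $A$-module $M = \mathcal{M}(m)$ of complexity at most $d$, there are uniform bounds (coming from the effective Buchsbaum--Eisenbud/Hermann-type bounds on syzygies, as used in \cite{bounds} and \cite{Libro schoutens}) on the complexity of the first $i+1$ syzygy modules of $M$; call the resulting bound $D_i = D_i(d)$. Thus one can describe, by existentially quantifying over codes of bounded length, a partial free resolution $A^{b_{i+1}} \xrightarrow{\Gamma_{i+1}} A^{b_i} \xrightarrow{\Gamma_i} \cdots \xrightarrow{\Gamma_1} A^{b_0} \to M \to 0$ in which every $b_j$ and every entry-degree is at most $D_i$. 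The predicate "$\Gamma_\bullet$ is exact at each spot" is expressible by the Ideal Membership and Equality formulas of Theorem \ref{member} and Remark \ref{Inc} applied to the images and kernels of the presenting matrices (kernels are themselves syzygy modules with bounded complexity, by the same syzygy bounds), so there is a formula $\mathrm{Res}_{i,d}(a,m,\gamma)$ asserting that $\gamma$ codes such a resolution of $\mathcal{M}(m)$ over $\mathcal{A}(a)$.

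Next I would handle the tensor/Hom step. Given codes for the resolution $\Gamma_\bullet$ of $M$ and a code $n$ for $N = \mathcal{M}(n)$, the complex $\Gamma_\bullet \otimes_A N$ (respectively $\mathrm{Hom}_A(\Gamma_\bullet, N)$) has presenting matrices whose entries are obtained from those of $\Gamma_\bullet$ and $N$ by bounded-degree arithmetic, hence is again of complexity bounded in terms of $d$ and $i$. Its $i$-th homology $\ker(\overline{\Gamma}_i)/\mathrm{im}(\overline{\Gamma}_{i+1})$ is a subquotient of a bounded free module by bounded-complexity submodules, so it has bounded complexity degree; write $V' = N_1/N_2$ with $gt(N_j)$ bounded. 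The formula $(\mathrm{Tor}_i)_d(a,m,n,v)$ then reads: there exist $\gamma$ and codes $n_1,n_2$ for submodules of the appropriate bounded free module such that $\mathrm{Res}_{i,d}(a,m,\gamma)$ holds, $n_1$ codes the kernel of the induced map (expressible via $\mathrm{IdMem}_d$ / syzygy formulas), $n_2$ codes the image of the next induced map, and $\mathcal{M}(v) \cong \mathcal{M}(n_1)/\mathcal{M}(n_2)$ — this last isomorphism being expressed by exhibiting, again existentially, a pair of bounded-degree matrices witnessing mutually inverse $A$-linear maps, with the inverse conditions checked by $\mathrm{Equal}_d$. The "moreover" clause is immediate: for any $(a,m,n)$ one literally runs the (effective, bounded) resolution-and-homology construction in $\mathcal{A}(a)$ to produce a witnessing $v$, and since every quantified object has length bounded purely by $d$ and $i$, the whole expression is a genuine first-order formula of bounded size independent of $k$.

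The main obstacle I expect is the uniformity and correctness of the syzygy bounds: one must be sure that the complexity of the $j$-th syzygy module, and of kernels of bounded-degree matrices over $\mathcal{A}(a)$, is bounded by a function of $d$ (and $j$) that does not depend on the field $k$ or on the characteristic. This is exactly the content of the effective bounds invoked in \cite{bounds} and \cite{Libro schoutens} — ultimately a Gröbner-basis degree bound that is field-independent — and the secondary bookkeeping obstacle is that in the local case one is working with $\mathcal{A}(a)_\mathfrak{p}$, so that "polynomials" must be replaced by "quotients of polynomials with denominator outside $\mathfrak{p}$"; one checks that $\mathrm{IdMem}_d$, $\mathrm{Equal}_d$, and the syzygy formulas localize, i.e.\ that membership/equality after localization is still first-order expressible with a bounded denominator (a saturation computation of bounded complexity, as in the lemma preceding this subsection). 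Granting these effective-bounds inputs, the construction of $(\mathrm{Tor}_i)_d$ and $(\mathrm{Ext}^i)_d$ is then a routine, if lengthy, assembly of the Ideal Membership and Equality primitives, and I would present it by giving the formula schema and verifying its two asserted properties, deferring the purely computational degree-bound estimates to the cited references.
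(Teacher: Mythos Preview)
Your outline is essentially correct and follows the standard route taken in Schoutens' work: bound the syzygies uniformly via field-independent Gr\"obner degree bounds, encode a truncated free resolution by existentially quantified tuples of bounded length, express exactness and the kernel/image computations through the Ideal Membership and Equality primitives, and finally witness the isomorphism with $\mathcal{M}(v)$ by a pair of bounded matrices. The caveats you raise about uniformity of the syzygy bounds and about localization are the right ones, and they are precisely what is handled in \cite{bounds} and \cite{Libro schoutens}.

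That said, you should be aware that the paper does not actually prove this theorem at all: its entire proof is the single line ``See \cite{bounds}, Corollary 4.4, page 150.'' So there is no paper-internal argument to compare against; you are reconstructing (accurately, at the level of a sketch) what the cited reference does. If your goal is to match the paper, a one-line citation suffices; if your goal is to supply an independent proof, your plan is sound, but the real work --- and the only place a referee would push back --- is in pinning down the explicit degree bounds $D_i(d)$ and verifying that the saturation/localization step keeps everything first-order with uniformly bounded codes, which you correctly flag as the main obstacle and correctly defer to the same references.
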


\begin{proof}
See \cite{bounds}, Corollary 4.4, page 150.
\end{proof}

We recall the following standard result (\cite{eisen}, page 167, Theorem
6.8).

\begin{theorem}
Let $(R,\mathfrak{m})$ be a local ring, and denote by $k$ the residue field $%
R/\mathfrak{m}$. If $M$ is a finitely generated $R$-module, then \textrm{pd}$%
_{R}(M)\leq n$ if and only if $\mathrm{Tor}_{n+1}^{R}(M,k)=0$.
\end{theorem}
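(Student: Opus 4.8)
The plan is to use \emph{minimal free resolutions} together with Nakayama's Lemma. As in the cited reference we take $R$ to be Noetherian, so that the finitely generated module $M$ admits a minimal free resolution
\[
\cdots \lra F_2 \overset{\partial_2}{\lra} F_1 \overset{\partial_1}{\lra} F_0 \overset{\partial_0}{\lra} M \lra 0 ,
\]
meaning each $F_i$ is free of finite rank and every matrix $\partial_i$ has entries in $\mathfrak{m}$. One constructs it inductively: having built $F_{i-1}$, choose a \emph{minimal} generating set for $\operatorname{ker}\partial_{i-1}$ and let $F_i$ be free on it; minimality of the generators is exactly the statement that they map to a $k$-basis of $(\operatorname{ker}\partial_{i-1})\otimes_R k$, which forces $\partial_i\otimes_R k=0$. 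Hence tensoring the whole complex with $k=R/\mathfrak{m}$ produces a complex with zero differentials, and
\[
\mathrm{Tor}_i^R(M,k)\;\cong\;F_i\otimes_R k\;\cong\;k^{\,\rk F_i}\qquad\text{for all }i\geq 0 .
\]

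Granting this, both implications are short. If $\mathrm{pd}_R(M)\leq n$, then $M$ has some projective (indeed free) resolution of length $\leq n$; since $\mathrm{Tor}$ may be computed from any projective resolution of the first variable, $\mathrm{Tor}_i^R(M,N)=0$ for every $i>n$ and every $R$-module $N$, in particular $\mathrm{Tor}_{n+1}^R(M,k)=0$. Conversely, if $\mathrm{Tor}_{n+1}^R(M,k)=0$, the displayed isomorphism gives $F_{n+1}\otimes_R k=0$, i.e. $F_{n+1}=\mathfrak{m}F_{n+1}$, so Nakayama yields $F_{n+1}=0$. Then $\partial_{n+1}=0$ and therefore $\operatorname{ker}\partial_n=\operatorname{im}\partial_{n+1}=0$, so
\[
0\lra F_n\overset{\partial_n}{\lra}F_{n-1}\lra\cdots\lra F_0\lra M\lra 0
\]
is a free resolution of $M$ of length $\leq n$; hence $\mathrm{pd}_R(M)\leq n$.

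A resolution-free variant proceeds by induction on $n$, using only the long exact sequence of $\mathrm{Tor}$. Pick a minimal generating set of $M$, giving $0\to K\to R^r\to M\to 0$ with $R^r\otimes_R k\to M\otimes_R k$ an isomorphism. For $n=0$: the long exact sequence and flatness of $R^r$ show $\mathrm{Tor}_1^R(M,k)\cong K\otimes_R k$, so its vanishing forces $K=\mathfrak{m}K$, hence $K=0$ by Nakayama and $M$ is free, i.e. $\mathrm{pd}_R(M)=0$ (for finitely generated modules over a local ring, projective is equivalent to free). For $n\geq 1$: again by flatness of $R^r$ one gets $\mathrm{Tor}_{n+1}^R(M,k)\cong\mathrm{Tor}_n^R(K,k)$, while $\mathrm{pd}_R(M)\leq n$ is equivalent to $\mathrm{pd}_R(K)\leq n-1$ for a first syzygy $K$ (Schanuel's Lemma); the inductive hypothesis applied to $K$ then closes the loop.

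The argument contains no real obstacle; the single point that must be treated with care is the existence and defining property of the minimal free resolution — precisely, the assertion that choosing minimal generators at each stage makes every differential vanish modulo $\mathfrak{m}$ — since this is what turns the homological hypothesis $\mathrm{Tor}_{n+1}^R(M,k)=0$ into the concrete statement $F_{n+1}=0$ via Nakayama's Lemma. The remaining ingredients are the naturality and long exact sequence of $\mathrm{Tor}$ and the identification of finitely generated projective with free over a local ring.
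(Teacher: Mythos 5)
The paper itself does not prove this statement: it records it as a standard result and simply cites Eisenbud (Theorem~6.8, p.~167), so there is no ``paper's proof'' to compare against. Your proof is correct, and your primary argument --- build a minimal free resolution, observe that tensoring with $k$ kills all differentials so $\mathrm{Tor}^R_i(M,k)\cong F_i\otimes_R k\cong k^{\rk F_i}$, then apply Nakayama to get $F_{n+1}=0$ from $\mathrm{Tor}_{n+1}^R(M,k)=0$ --- is precisely the textbook argument that Eisenbud gives. The one slip is an off-by-one in the justification that $\partial_i\otimes_R k=0$: the minimality of the generating set chosen to build $F_i$ makes $F_i\otimes_R k\to(\ker\partial_{i-1})\otimes_R k$ an isomorphism, but what actually forces $\partial_i\otimes_R k=0$ is the minimality at the \emph{previous} stage, which (via Nakayama) gives $\ker\partial_{i-1}\subseteq\mathfrak{m}F_{i-1}$, hence that the composite $F_i\twoheadrightarrow\ker\partial_{i-1}\hookrightarrow F_{i-1}$ has image in $\mathfrak{m}F_{i-1}$. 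This is a phrasing matter, not a gap --- the chain of minimal choices clearly supplies exactly this. Your alternate inductive proof via Schanuel's Lemma and the long exact sequence of $\mathrm{Tor}$ is also correct. Both arguments need $R$ Noetherian so that the syzygies are finitely generated; you flag this explicitly, and it holds automatically in the paper's setting of localizations of affine $k$-algebras.
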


\begin{remark}
\label{formulita}It is clear from the previous theorems that there exists a
formula $($\textrm{pd}$_{<n})_{d}$ such that, if $M=\mathcal{M}(v)$ is an $R=%
\mathcal{R}(a)$-module with complexity less than $d$, where $(R,\mathfrak{m}%
) $ is a local $k$-algebra with complexity less than $d$, then $k\models ($%
\textrm{pd}$_{<n})_{d}(a,v),$ if and only if, \textrm{pd}$_{R}(M)\leq n$.
\end{remark}

From this preliminaries we obtain the following main result:

\begin{theorem}
For each $d>0$ there exists a first order formula $\mathrm{Koh}_{d}$ such
that if $R\subset S$ is a module-finite extension of local $k$-algebras such
that the complexity of this extension is at most $d$, then there exists a
retraction $\rho :S\rightarrow R$ if and only if $k\models \mathrm{Koh}%
_{d}(a,b),$ where $R\cong \mathcal{R}(a)$ and $S\cong \mathcal{S}(b)$.
\end{theorem}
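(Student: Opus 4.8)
The plan is to assemble the formula $\mathrm{Koh}_d$ from the building blocks already developed in this section, translating each ring-theoretic ingredient in Remark \ref{grave} into a first-order predicate on codes. The guiding idea is Remark \ref{grave}: writing $S\cong T/J$ with $T$ a free $R$-module defined by monic relations and $\mathrm{ht}(J)=0$, a retraction $\rho:S\to R$ exists if and only if $\mathrm{Ann}_T(J)\nsubseteq \mathfrak{m}T$ (here one needs $T/\mathfrak{m}T$ Gorenstein, which holds because it is a tensor product of the rings $R/\mathfrak{m}[x_i]/(\bar f_i)$, each of which is a zero-dimensional complete intersection, hence Gorenstein, and Gorenstein-ness is preserved under tensor products over the field $R/\mathfrak{m}$). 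So the formula must (i) recognize that the code $b$ presents a module-finite extension $R\subset S$, (ii) internally construct (a code for) the auxiliary ring $T$ and the kernel ideal $J$, (iii) express $\mathrm{Ann}_T(J)\nsubseteq \mathfrak{m}T$, and then $\mathrm{Koh}_d$ is the conjunction of the complexity-$\le d$ hypotheses with this last condition.

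\textbf{Key steps, in order.} First I would fix $d$ and note that by the bounds cited from \cite{bounds} (the length of a code for an object of complexity $\le d$ depends only on $d$), all the intermediate objects — the generators $s_1,\dots,s_n$ of $S$ as an $R$-algebra, their monic polynomials $f_i$, the ring $T=R[x_1,\dots,x_n]/(f_1,\dots,f_n)$, the surjection $\phi:T\to S$, its kernel $J$, and the annihilator $\mathrm{Ann}_T(J)$ — have complexity bounded by some $D=D(d)$; in particular the Lemma quoted from \cite{bounds}, Lemma 3.2, guarantees that $\mathrm{Ann}_T(J)$ has complexity at most a bound depending only on $d$. Second, using Theorem \ref{member} (Ideal Membership) and the derived formulas $\mathrm{Inc}_d$, $\mathrm{Equal}_d$, $\mathrm{MaxIdeal}_{d,n}$ of the Remark following it, together with the $\mathrm{Tor}$-formulas of Theorem \ref{4.4} and the projective-dimension formula $(\mathrm{pd}_{<n})_d$ of Remark \ref{formulita}, I would write down: a formula asserting ``$b$ is a valid code for a local $k$-algebra $S$ of complexity $\le d$'', a formula asserting ``$R\subset S$ as encoded is module-finite with $\le d$ generators as an $R$-module and $\mathrm{pd}_R S\le d$'' (this uses $(\mathrm{pd}_{<d})_d$ applied to the code of $S$ as an $R$-module), and a formula $\mathrm{NonSplit}_d(a,b)$ asserting ``$\mathrm{Ann}_T(J)\subseteq \mathfrak{m}T$'' — the latter is expressible because $\mathrm{Ann}_T(J)$ and $\mathfrak{m}T$ both have complexity bounded in terms of $d$, so $\mathrm{Inc}_{D}$ applies once one has internally produced their codes. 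Third, I would set
\begin{equation*}
\mathrm{Koh}_d(a,b) \;=\; \bigl(\text{$a$, $b$ code local $k$-algebras $R\subset S$ of complexity $\le d$}\bigr)\;\wedge\;(\mathrm{pd}_{<d})_d(\text{code of $S$ over $R$})\;\wedge\;\neg\,\mathrm{NonSplit}_d(a,b),
\end{equation*}
and verify by Remark \ref{grave} / Proposition \ref{retraccion} that $k\models \mathrm{Koh}_d(a,b)$ holds precisely when a retraction $S\to R$ exists.

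\textbf{Main obstacle.} The delicate point is not the logic of the final Boolean combination but the \emph{definability of the passage $b\mapsto(\text{code of }T,\ \text{code of }J)$} by a first-order formula with bounds depending only on $d$: one must choose the algebra generators $s_i$ and the monic polynomials $f_i$ of bounded degree, build $T$ and the kernel $J=\ker\phi$ all inside the first-order language, and then compute $\mathrm{Ann}_T(J)$. Existence of such uniform constructions is exactly what is packaged in the results cited from \cite{bounds} and \cite{Libro schoutens} (the ``codes for polynomial rings and modules'' machinery of this section, plus Lemma 3.2 for the annihilator bound); so the real content of the proof is to check that each of these black boxes applies in the present situation — in particular that the ring $T$ and the ideal $J$ can be taken of complexity bounded by a function of $d$ alone, which follows since $S$ has complexity $\le d$ and is generated by at most $d$ elements over $R$, each satisfying a monic relation of degree at most the rank bound, again a function of $d$. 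Once that uniformity is in hand, $\mathrm{Koh}_d$ is simply the explicit conjunction displayed above.
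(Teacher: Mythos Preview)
Your proposal is correct and follows essentially the same route as the paper: represent $S\cong T/J$ via Remark \ref{grave}, invoke Proposition \ref{retraccion} to reduce the existence of a retraction to the condition $\mathrm{Ann}_T(J)\nsubseteq \mathfrak{m}T$, and assemble $\mathrm{Koh}_d$ from the inclusion formula $\mathrm{Inc}_d$ and the projective-dimension formula $(\mathrm{pd}_{<n})_d$. The paper writes $\mathrm{Koh}_d$ as an implication $(\mathrm{pd}_R(S)<d)\Rightarrow \lnot\,\mathrm{Inc}_d(\text{code of }\mathrm{Ann}_T(J),\text{code of }\mathfrak{m}T)$ with extra free variables for the codes of $\mathrm{Ann}_T(J)$ and $\mathfrak{m}T$, rather than your conjunction with these codes constructed internally, but under the standing hypothesis that the extension has complexity at most $d$ the two formulations are equivalent; your discussion of the uniform bounds (why $T$, $J$, and $\mathrm{Ann}_T(J)$ stay within a complexity depending only on $d$) and of why $T/\mathfrak{m}T$ is Gorenstein actually fills in details the paper's proof leaves implicit.
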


\begin{proof}
As shown in Remark \ref{grave} we can represent $S$ as $S\cong T/J$, where
the complexity of $J$ and $T$ is less than $d$. We know there is a
retraction $\rho :S\rightarrow R$ if and only if $\mathrm{Ann}%
_{T}(J)\nsubseteq mT$.\newline
As proved in Remark \ref{formulita}, there exists a formula $($\textrm{pd}$%
_{<n})_{d}$ such that if $R=\mathcal{R}(a)$ and $S=\mathcal{R}(b)$ then $%
k\models $\textrm{pd}$_{<n}(a,b)$ if and only if \textrm{pd}$_{R}(S)<n$.%
\newline
Let $\mathrm{Koh}_{d}$ be the formula which establishes the following: if 
\textrm{pd}$_{R}(S)<d$, then $\mathrm{Ann}_{T}(J)\nsubseteq mT$. Explicitly: 
\begin{equation*}
\mathrm{Koh}_{d}(\xi ,\xi ^{\prime },\nu ,\nu ^{\prime }):\underset{i=0}{%
\overset{d-1}{\bigvee }}Pd_{R}(\xi ,\nu )=i\Longrightarrow \lnot Inc_{d}(\nu
^{\prime },\xi ^{\prime }).
\end{equation*}%
Here $\nu ^{\prime }$ and $\xi ^{\prime }$ are reserved for a code of $%
\mathrm{Ann}_{T}(J)$ and $\mathfrak{m}T$, respectively. Then, it is clear
that $k\models \mathrm{Koh}_{d}(a,a^{\prime },b,b^{\prime })$ if and only if
there exist a retraction $\rho :S\rightarrow R$.
\end{proof}

\begin{theorem}
\label{muymuybueno}Let $R\subset S$ be module-finite extensions of local $k$%
-algebras. Fix $d>0,$ an arbitrary positive integer. The set of prime
numbers $p$ for which there are counterexamples to Koh's Conjecture of
complexity less than $d$ is finite.
\end{theorem}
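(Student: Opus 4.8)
The plan is to combine the syntactic encoding machinery developed in the previous section with the two transfer principles (Theorem \ref{lef-fuerte} and Proposition \ref{tranferencia fuerte}), together with Koh's Conjecture in characteristic zero (Theorem \ref{koh 0}). First I would observe that by Theorem \ref{koh 0} the conjecture holds over every algebraically closed field of characteristic zero: any module-finite extension $R\subset S$ of local $k$-algebras with $k$ algebraically closed of characteristic zero has $\mathrm{pd}_R(S)<\infty$ whenever the extension has complexity $\leq d$, so a retraction exists. Now fix $d>0$. Using the formula $\mathrm{Koh}_d$ constructed in the preceding theorem, one encodes the statement ``every module-finite extension of local $k$-algebras of complexity $\leq d$ admits a retraction'' as a single first order sentence $\phi_d$ in the language of rings: namely $\phi_d$ quantifies universally over all tuples $(\xi,\xi',\nu,\nu')$ that are \emph{valid codes} for such an extension (validity being expressible by the auxiliary formulas $\mathrm{MaxIdeal}$, $\mathrm{Inc}_d$, $\mathrm{Equal}_d$, $\mathrm{IdMem}_d$, and the complexity-boundedness conventions) and asserts $\mathrm{Koh}_d$ holds of them.

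The second step is to verify that $\phi_d$ is true in every algebraically closed field of characteristic zero. This is exactly the content of the first step together with the defining property of $\mathrm{Koh}_d$: for a valid code $(a,a',b,b')$ of an extension $R\cong\mathcal{R}(a)\subset S\cong\mathcal{S}(b)$ of complexity $\leq d$, we have $k\models\mathrm{Koh}_d(a,a',b,b')$ if and only if a retraction $S\to R$ exists, and such a retraction does exist by Theorem \ref{koh 0}. Hence $k\models\phi_d$ for every algebraically closed $k$ with $\Char k=0$.

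The third step applies Lefschetz's Principle (Theorem \ref{lef-fuerte}) to the sentence $\phi_d$: since $\phi_d$ is true in some (equivalently, every) algebraically closed field of characteristic zero, there is a natural number $m=m(d)$ such that for all primes $p>m$, $\phi_d$ holds in every algebraically closed field of characteristic $p$. Translating back through the encoding: for every prime $p>m(d)$, every algebraically closed field $k$ of characteristic $p$, and every module-finite extension $R\subset S$ of local $k$-algebras of complexity $\leq d$, a retraction $\rho:S\to R$ exists, so $R\subset S$ splits — i.e.\ there is \emph{no} counterexample to Koh's Conjecture of complexity $<d$ in characteristic $p$. Therefore the set of primes $p$ admitting such a counterexample is contained in $\{p: p\leq m(d)\}$, which is finite, and we set $p_d=m(d)$.

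The main obstacle is not the logical transfer itself but the bookkeeping in step one: one must be certain that ``valid code for a complexity-$\leq d$ module-finite extension of local $k$-algebras'' is genuinely a first order condition with parameters the code tuples, and that the length of these tuples is bounded by a function of $d$ alone (so that $\phi_d$ has finitely many quantifiers). This rests on the cited results — Theorem \ref{member}, Theorem \ref{4.4}, Remark \ref{formulita} (giving the $(\mathrm{pd}_{<n})_d$ formula), the annihilator bound $D=D(d)$ from the lemma of \cite{bounds}, and the maximal-ideal formula — each of which bounds the relevant complexity uniformly in $d$. One also needs that the finiteness of $\mathrm{pd}_R(S)$ in the characteristic-zero hypothesis of Theorem \ref{koh 0} is automatic here, since complexity $\leq d$ forces $\mathrm{pd}_R(S)\leq d<\infty$ by definition. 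Granting all of this, the argument is a direct application of the Lefschetz transfer, and no further computation is required.
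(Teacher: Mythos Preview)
Your argument is correct and follows essentially the same strategy as the paper: use Theorem~\ref{koh 0} to verify the encoded statement in characteristic zero, then transfer to large prime characteristic. The only minor difference is that the paper invokes Proposition~\ref{tranferencia fuerte} directly on $\mathrm{Koh}_d$, whereas you first pass to the universal closure $\phi_d$ and apply Lefschetz's Principle (Theorem~\ref{lef-fuerte}); your version is slightly more careful, since the transfer principles apply to sentences rather than open formulas, and since the $\mathrm{MaxIdeal}$ encoding relies on the Nullstellensatz and hence on $k$ being algebraically closed.
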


\begin{proof}
From Theorem \ref{koh 0} we see that $K\models \mathrm{Koh}_{d}(a,b)$ for
any field $K$ of characteristic zero. Then, by Proposition \ref{tranferencia
fuerte}, we deduce that $k\models \mathrm{Koh}_{d}(a,b),$ for every field $k$
of prime characteristic $p$ sufficiently large. More precisely: Given $d>0$,
there exists a prime number $p_{d}$ such that for any field $k$ of
characteristic $p>p_{d}$, and any modulo-finite extension $R\subset S$ of
local $k$-algebras with complexity at most $d$ there exists a retraction $%
\rho :S\rightarrow R$.
\end{proof}

\section{Acknowledgements}

Danny Arlen de Jes\'us G\'omez-Ram\'irez would like to thank most of the members of the family Ramirez Correa for the support during all these years. Specially, he thanks Brianna and Samuel (for bringing new life and happiness to our family!).


\begin{thebibliography}{10}

\bibitem{andre} Yves Andr\'e, La Conjecture du Facteur Direct, (preprint) arXiv
preprint  arXiv:1609.00345 (2016)

\bibitem{bhatt} Bhargav Bhatt, On the Direct Summand Conjecture and
Its Derived Variant, (preprint) arXiv:1608.08882 (2016)

\bibitem{hocster-Mclaughing} Dutta S. P., Hochster M., and McLaughlin J. E.,
(1985), Modules of Finite Projective Dimension with Negative Intersection
Multiplicities, Inventiones Mathematicae, 79(2) 253-291, Springer, Berlin (1985)

\bibitem{eisen} Eisenbud, D., Conmutative Algebra with a View Toward
Algebraic Geometry, Graduate Texts in Mathematics, Vol. 150,
Springer-Verlag, New York, (1994)

\bibitem{evg} Evans, E.G and Griffith, P., The Syzygy Problem,
Ann of Math, 114(2), 323-333 (1981)

\bibitem{heitmann} Heitmann R. C., A Counterexample to the Rigidity Conjecture for Rings. 
Bull. Am. Math. Soc. 29(1), 94-97 (1993)


\bibitem{heitmann2002} Heitmann R. C., The Direct Summand Conjecture in Dimension Three,
Ann of Math, 156(2), 695-712 (2002)

\bibitem{hochstercontracted} Hochster, M. Contracted Ideals from Integral
Extensions of Regular Rings, Nagoya Math.J. 51, 25-43 (1973)

\bibitem{hochsterhomological} Hochster, M., Topics in the Homological
Theory of Modules over Commutative Rings, CBMS Regional Conf. Ser. in
Math.,Vol. 24, Amer. Math. Soc., Providence (1975)

\bibitem{hochstercanonical} Hochster, M., Canonical Elements in
Local Cohomology Modules and the Direct Summand Conjecture. Journal of
Algebra, 84(2), 503-553 (1983)

\bibitem{homological conjectures old and new} Hochster, M.,
Homological Conjectures, Old and New. Illinois Journal of Mathematics,
51(1), 151-169 (2007)

\bibitem{CM} Hochster, M., \& Huneke, C., Infinite Integral
Extensions and Big Cohen--Macaulay Algebras. Annals of mathematics, 53-89 (1993)


\bibitem{thesis koh} Koh, J. H., The Direct Summand Conjecture
and Behavior of Codimension in Graded Extensions. Doctoral dissertation, Ph.
D. Thesis, University of Michigan (1983)

\bibitem{kunz} Kunz, E., Introduction to Commutative Algebra and
Algebraic Geometry. Springer, Berlin (2002)

\bibitem{modelteory} Marker, D., Model Teory: An Introduction, Graduate
texts in Mathematics, Vol 217, Springer, New York (2002)

\bibitem{mcculloughthesis} McCullough, J. (2011). On the Strong Direct
Summand Conjecture, PhD Thesis, University of Illinois, Urban-Champaigne
(2009) \url{http://math.ucr.edu/\symbol{126}jmccullo/talks/09Urbana.pdf}

\bibitem{ohidirsumcon} Ohi, T., Direct Summand Conjecture and
Dscent for Flatness. Proceedings of the American Mathematical Society,
124(7), 1967-1968 (1996)

\bibitem{Peskine} Peskine, C., \& Szpiro, L., Dimension
projective finie et cohomologie locale. Publications Math\'{e}matiques de
l'IH\'{E}S, 42(1), 47-119 (1973)

\bibitem{robertsdirsumcon} Roberts, P., Heitmann's proof of the
direct summand conjecture in dimension 3. (2002) (preprint) arXiv preprint math/0212073. 

\bibitem{bounds} Schoutens, H., Bounds in Cohomology, Israel J. Math.
116, 125-169 (2000)


\bibitem{Libro schoutens} Schoutens, H., The Use of Ultraproducts in
Commutative Algebra, Lecture Notes in Mathematics, 1999, 204p, Springer
(2010).


\bibitem{serrelocal} Serre, J. P., \& Gabriel, P., Alg\`{e}bre
locale, multiplicit\'{e}s: cours au Coll\`{e}ge de France, 1957-1958 (No.
11). Springer, Berlin (1975)

\bibitem{velezsplitting} V\'{e}lez, J. D., Splitting Results in
Module-Finite Extension Rings and Koh's Conjecture. Journal of Algebra,
172(2), 454-469. (1995)

\bibitem{velez-rigo} V\'{e}lez, J. D., \& Florez, R., Failure of
Slitting from Module-Finite Extension Rings. Beitr\"{a}ge zur Algebra und
Geometrie, 41(2), 345-357 (2000)
\end{thebibliography}
\end{document}